\theoremstyle{definition}
\newtheorem{thm}{\bf Theorem}
\newtheorem{prop}{Proposition}
\newtheorem{lem}{Lemma}
\newtheorem{defn}{\bf{Definition}}
\newtheorem*{examples*}{Examples}
\theoremstyle{remark}
\renewcommand{\epsilon}{\varepsilon}
\newcommand{\NN}{\mathbb{N}}
\newcommand{\RR}{\mathbb{R}}
\newcommand{\ep}{\epsilon}%
\newcommand{\fin}[1]{\mbox{Fin}(#1)}
\newcommand{\multi}[1]{\mbox{mul}(#1)}
\newcommand{\sympro}{\mbox{SP}}
\newcommand{\symproinf}{\mbox{SP}^\infty}
\newcommand{\loopsBM}{\Omega'_s BM}
\newcommand{\interval}{\mathcal I}
\newcommand{\intM}{\interval \times M}
\newcommand{\adm}[1]{W(#1)}
\newcommand{\adms}{W_s}
\newcommand{\ims}[1]{I_M(1,s)\times \{s\}}
\newcommand{\ems}[1]{E_M(1,s)\times \{s\}}
\newcommand{\openset}{O}
\newcommand{\filternumber}{l}
\newcommand{\elemone}[1]{E(#1)}
\newcommand{\elemtwo}[1]{F(#1)}
\newcommand{\pretensor}[1]{T_{#1}}
\newcommand{\translation}[1]{\tau_{#1}}
\newcommand{\intervalconvention}[3]{\,\ensuremath{_{#1}|{#2}|_{#3} }}
\newcommand{\Image}{\mathop{\mathrm{Im}}\nolimits}
\newcommand{\map}[1]{\mathop{\mathrm{Map}}\nolimits(#1)}
\title{Configuration space of intervals with partially summable labels}
\author{Shingo Okuyama and Kazuhisa Shimakawa}
\date{\today}
\begin{document}
\maketitle
%%%%%%%%%%%%%%%%%%%%%%%%%%%%%%%%%%%%%%%%%%%%%%%%%%%%%%%%%%%

\begin{abstract}
  A configuration space of intervals in $\RR^1$
with partially summable labels is 
constructed. It is a kind of an extension of the 
configuration space with partially summable labels
constructed by the second author and at the same time 
a generalization of the configuration space of intervals
with labels in a based space constructed by the first
author. An approximation theorem of the preceding
configuration space is generalized to our case. 
When partially summable labels are given by a partial abelian monoid $M,$
we prove that it is weakly homotopy equivalent to the space of based loops 
on the classifying space of $M$ under some assumptions on $M$.
\end{abstract}
\section{Introduction}
Configuration space is one of the most useful constructions in algebraic topology.
In particular, configuration space of unordered points in a topological space has been shown to
be a nice model for homotopical approximation to mapping spaces. 
It dates back to the works of Milgram\cite{milgram-iterated-loop-spaces}, May\cite{may-geometry-of-iterated},
and Segal\cite{segal-configuration-spaces-and},
which states that unordered points in $\RR^n$ with labels in a based space $X$ is weakly homotopy equivalent to $\Omega^n \Sigma^n X,$
if $X$ is path-connected. Then it is generalized to a setting with a manifold replaces 
$\RR^n,$ 
and with a partial monoid replaces a space $X$ as labels 
\cite{bodigheimer-stable-splittings-of}, \cite{macduff-configuration-spaces}, 
\cite{salvatore-configuration-spaces-with}, \cite{shimakawa-configuration-spaces-with}.
Based on these works and others, a new notion called factorization homology is 
developed recently \cite{ayala-francis-factorization-homology}(see
also \S 6 of \cite{knudsen-configuration-spaces-in}).

We consider a configuration space of intervals in $\RR$, in which two
intervals are pasted when meeting endpoints have opposite properties,
that is, one is open and the other is closed,
and a half-open interval annihilates when its length approaches zero.
These transformations of intervals are called cutting-pasting and
creation-annihilation, respectively.
If we attach labels on these intervals to consider a configuration 
space of intervals in $\RR$
with labels in some based space $X,$ then
we obtain a space which is 
weakly homotopy equivalent to $\Omega\Sigma X,$
the space of based loops in the reduced suspension of X \cite{okuyama-space-of-intervals}.
In this paper, we consider a configuration space of intervals with
labels in a partial abelian monoid $M.$
We can use partial sum to define a sort of interaction of intervals. 
So our configuration space has as its topology one which reflects this 
interaction as well as transformations
come from cutting-pasting and creation-annihilation mentioned above.
The purpose of this paper is to construct such a space
and show that there exists a weak homotopy
equivalence analogous to \cite{okuyama-space-of-intervals}. 
When we regard a based space $X$ as a topological partial abelian
monoid by a trivial sum, then the reduced suspension 
construction can be considered as a special case of
the classifying space construction
$BM$ of a partial abelian monoid $M$\cite{segal-configuration-spaces-and}. 

We can state our theorem as follows.
\begin{thm}\label{thm:main}
Let $M$ be a (discrete) partial abelian monoid whose elements are self-insummable.
Then the configuration space of intervals in $\RR$ with labels in $M$
is weakly homotopy equivalent to $\Omega B M.$
\end{thm}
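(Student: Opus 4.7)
\medskip

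\noindent\textbf{Proof proposal.} The plan is to combine the approximation theorem for interval configurations established earlier in this paper with the corresponding theorem for point configurations with partially summable labels due to the second author, following the scanning paradigm of McDuff, Segal, and Okuyama.

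First, I would define an explicit comparison map $\phi : C(M) \to \Omega BM$. Realizing $BM$ as Segal's classifying space of the partial abelian monoid $M$, a configuration of labeled intervals in $\RR$ produces, for each $t \in \RR$, an element of $M$ given by the partial sum of labels on intervals containing $t$; as $t$ sweeps across $\RR$ one obtains a based loop in $BM$. The cutting-pasting and creation-annihilation relations imposed on $C(M)$ correspond precisely to the face and degeneracy identifications in the bar construction, so $\phi$ is well-defined and continuous.

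Next, I would filter $C(M)$ by the subspaces of configurations supported in $[-s,s]$ and set up the standard scanning diagram comparing this filtration with the bar filtration on $\Omega BM$. Using the approximation theorem proved earlier in the paper, one reduces $C(M)$ up to weak equivalence to a model that can be compared directly with the point-configuration space of the second author; his approximation theorem then identifies that model with $\Omega BM$, matching the map induced by $\phi$ up to homotopy. A telescope/colimit argument along the support filtration combines the finite-$s$ comparisons into the global statement.

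The principal obstacle will be establishing a quasifibration property at degenerate strata: configurations with coincident endpoints, or with half-open intervals of length approaching zero. This requires a Dold--Thom-style local triviality argument together with an induction on the support filtration. The self-insummability hypothesis on $M$ enters crucially here: without it, two summable labels on adjacent intervals produced by the cutting-pasting relation could coalesce into a single label, collapsing several local strata to one and breaking the fibration property. Under self-insummability, the local stratification near singular configurations remains separated enough for the quasifibration replacement to go through, which is the technical heart of the proof.
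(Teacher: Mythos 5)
Your high-level plan (a scanning map followed by a Dold--Thom quasifibration argument, with self-insummability ensuring separation of strata) is in the right family, and your intuition about where self-insummability enters is correct: it gives a unique-decomposition property for the scanned data (Lemma~\ref{lem:unique-representation}), without which the scanning construction is not even well-defined. However, there are two substantive gaps. First, your description of the scanning map is off: summing the labels of intervals containing $t$ produces a function $\RR \to M$, not a loop in $BM$; the actual construction must scan a small window around each $t$ and record the positions and parities of endpoints inside it, producing an element of $S^1 \otimes M = BM$, and the paper spends all of \S 4 setting this up carefully (including the replacement and summation steps $r$ and $s_t$ needed to make the sums defined). Second, and more importantly, you never name the quasifibration. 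Saying one ``establishes a quasifibration property'' does not tell you what map is to be a quasifibration, over what base, or with what fiber, and without that there is no long exact sequence to compare. The paper's structural move, which your proposal is missing, is to build a Moore-type half-space model $\widetilde{E}_M$ (configurations fixed by the mirror involution, with $0$ as a vanishing end) together with a map $p : \widetilde{E}_M \to BM$; the fiber is the thickening $\widetilde{I}_M$, the total space $\widetilde{E}_M$ is shown to be contractible, and Proposition~\ref{prop:quasifibration} shows $p$ is a quasifibration via the Dold--Thom criteria applied over the classifying-space filtration $F_l BM$, not over the support filtration $[-s,s]$ as you suggest. Comparing with the path-loop fibration then finishes the proof.

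You also propose reducing to the second author's approximation theorem for point configurations, but the paper does not do this, and such a reduction would itself require constructing a comparison between the interval model and the point model of $\Omega BM$, which is not obviously easier than the direct argument and is nowhere established. Treat the interval construction as a parallel, self-contained model, not a corollary of the point-configuration result.
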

Self-insummability of a partial abelian monoid which appears in the above theorem
is explained in \S 2. This condition is used in Lemma \ref{lem:unique-representation}.
This paper is organized as follows.
In \S 2.1, we give a definition of partial abelian monoid. In \S 2.2, we give a definition of a product of 
partial abelian monoids, which is shown to be useful for constructing configuration spaces. 
Configuration spaces of intervals $I$ and $I_M$ are defined in \S 3.1 and \S 3.2, respectively. 
To prove the weak equivalence, we construct a thickening $\widetilde{I}_M$ of $I_M$ and
maps
$$
 I_M \leftarrow \widetilde{I}_M \to \Omega BM,
$$
which induce isomorphisms on homotopy groups.
The thickening and other auxiliary spaces are defined in \S 3.3. 
In \S 4, we construct of a map $\alpha : \widetilde{I}_M \to \Omega B M.$
Then in \S 5, we give  a proof of the main theorem.
%, but only for the special case
%where $M$ is a discrete partial abelian monoid. 
%In \S 6, we show how to
%improve the previous results to a more general case, 
%stated in the main theorem.

In this paper, intervals in $\RR^1$ with two different roles occur.
One is as an object of which configuration is to be considered and the other is as just 
a subset of $\RR^1$ for usual usage, that is, a domain of configuration, a domain of definition of a function, 
or an interval for a homotopy. For the latter, we use a standard notation such as
$ ( a , b ] = \{ x ~|~ a < x \leq b \}.$ For the former, we make some convention based on 
another standard notation such as $ ] a , b ] = \{ x ~|~ a < x \leq b \}.$ See \S 3.1 for the convention.
We work in the category of compactly generated weak Hausdorff spaces and
base points are assumed to be non-degenerate.

\section{Partial abelian monoid}

\subsection{Partial abelian monoid}

The notion of partial abelian monoid plays an important role in our work.
It is an abelian monoid with partially defined sum, whose definition is given
as follows : 

\begin{defn}\label{defn:pam}
A {\bf topological partial abelian monoid} is a space $M$ with base point $0$
equipped with a subspace
$M_2$ of $M\times M$ 
and a map $m : M_2\to M$ which satisfies
\begin{enumerate}
	\item $M\vee M\subset M_2,$ and $m(a,0)=m(0,a)=a$,	
	\item $(a,b)\in M_2$ if and only if $(b,a)\in M_2,$ and $m(a,b)=m(b,a),$
	\item $(m(a,b),c)\in M_2$ if and only if $(a,m(b,c))\in M_2,$ and $
	m(m(a,b),c)=m(a,m(b,c)) .$
\end{enumerate}
We denote $m(a,b)=a+b.$ By associativity, we can speak of
summable $n$-tuples in $M^n$, and we denote the subset of summable $n$-tuples by $M_n.$ 
%Successive sum on this summable 
%$n$-tuples are also denoted by $\mu : M_n\to M.$
\end{defn}

A partial abelian monoid $M$ is said to be self-insummable if every non-zero element
is self-insummable, that is, $\Delta \cap M_2 = \{ ( 0, 0 ) \}.$

Partial abelian monoid is also called partial commutative monoid. 
Abelian partial monoid defined in \cite{shimakawa-configuration-spaces-with} is the same concept in philosophy, 
but it is more general than the partial abelian monoid given above.

\subsection{A product of partial abelian monoids}

For a topological space $Y,$ let $\multi{Y}=\coprod_{n\geq 0}\sympro^n{Y}.$
It can be considered as the free abelian monoid generated by $Y_+=Y\cup \{0\}$ with an appropriate
topology, or equivalently,  as $\symproinf{Y_+}$, an infinite symmetric product introduced in 
\cite{dold-thom-quasifaserungen-und-unendliche}. 
In this paper, however, we often treat an element of $\multi{Y}$ 
as a finite multiset --- a finite ``set'' with 
repeated elements~\cite{stanley-enumerative-combinatorics}. The notions such as
cardinality, submultiset or union of multisets are defined so as to respect multiplicities
of elements. We denote by $ x_1\dotplus \dots \dotplus x_n$ to mean the multiset consisting of
elements $x_1,\dots, x_n$ where each element is repeated a number of times equal to
its multiplicity. The union of multisets $\alpha$ and $\beta$ is denoted by $\alpha \dotplus \beta.$
Thus if $\alpha = a_1\dotplus\dots \dotplus a_r$ and $\beta = b_1\dotplus \dots \dotplus b_s$ then
$\alpha \dotplus \beta = a_1\dotplus\dots\dotplus a_r\dotplus b_1\dotplus \dots\dotplus b_s.$
To give an element of $\multi{Y}$ is equivalent to giving a map $\sigma : S\to Y,$ 
where $S$ is a finite set. In this form, a submultiset of $\sigma$ can be given as a restriction
map $\sigma|T : T \to Y$ to a subset $T$ of $S.$ If $M$ is a partial abelian monoid 
and $\sigma = m_1\dotplus\dots\dotplus m_r \in \multi{M},$
we say that $\sigma$ is summable if $(m_1,\dots, m_r)$, in any order, is a summable $r$-tuple.
Otherwise, we say that $\sigma$ is insummable. On the other hand, we say that $\sigma$ is 
pairwise insummable if, for any subset $T\subset S$ of cardinality two, $\sigma|T$ is insummable.

Let $M$ and $N$ be partial abelian monoids. 
We denote by $p_1 : M\times N \to M$ and $p_2 : M\times N \to N$ the projections from $M\times N$
onto its first and second factors respectively.
Let $\sigma \in \multi{M\times N}$ and $\sigma : S \to M\times N$ be its representation as a map.
Consider the following property for $\sigma$ : for any subset $T$ of $S,$
if one of $p_i\circ (\sigma|T)$ is pairwise insummable then
the other is summable.
We denote by $\pretensor{M,N}$ the subspace of $\multi{M\times N}$ consisting of $\sigma$ with
this property. 

Let $\sim$ be the least equivalence 
relation on $\pretensor{M,N}$ which satisfies the following three conditions:
\begin{enumerate}
	\item[(R1)] If $m_1$ or $n_1$ is zero then 
	$$ (m_1,n_1)\dotplus\dots\dotplus(m_r,n_r) \sim (m_2,n_2)\dotplus\dots\dotplus (m_r,m_r),$$
	\item[(R2)] If $m_1 = m'_1 + m''_1$ then
	$$ (m_1,n_1)\dotplus\dots\dotplus (m_r , n_r ) 
			 \sim (m'_1,n_1)\dotplus(m''_1, n_1)\dotplus (m_2, n_2)\dotplus\dots \dotplus (m_r , n_r),$$
	\item[(R3)] If $n_1=n'_1 + n''_1$  then
	$$ (m_1,n_1)\dotplus\dots\dotplus (m_r , n_r )
			 \sim (m_1,n'_1)\dotplus (m_1, n''_1)\dotplus (m_2, n_2) \dotplus \dots \dotplus (m_r , n_r).$$
\end{enumerate}
Let $T_k$ be the subspace of $\pretensor{M, N}$ 
of elements of cardinality less than or equal to $k.$ Then the relation $\sim$ on
$\pretensor{M,N}$ induces a relation on $T_k.$
Then we denote by $M\otimes N$  the union $\bigcup_{k\geq 0} (T_k / \sim)$ 
of quotient spaces. Let $\pi_\otimes : \pretensor{M,N} \to M\otimes N$ be the natural map.
Two elements $[\alpha], [\beta]$ in $M\otimes N$ are summable if we can choose
their representatives $\alpha, \beta$ in $\pretensor{M,N}$ so that their sum $\alpha \dotplus \beta$ taken in $\multi{M\times N}$
is contained in $\pretensor{M,N}.$
Thus,  $M\otimes N$ is a partial abelian monoid in a natural way.

The product of partial abelian monoids defined above covers diverse examples.
\begin{examples*}
	A based space $X$ can be regarded as a trivial partial abelian monoid by setting
	$X_2 = X\vee X$ and $\mu : X\vee X \to X$ the folding map. Then 
	$X\otimes M$ is the configuration space of finite points in $X$ with labels in $M$
	such that only summable labels occur simultaneously.
\begin{enumerate}
	\item For two based spaces $X, X' , $
	their product $X\otimes X'$ coincides with their smash product
	$X \wedge X'.$
	\item \label{ex:classifying}Viewing $S^1$ as a based space, we get
	$S^1\otimes M = BM$ the classifying space defined in 
	\cite{segal-configuration-spaces-and}. In particular, if $M$ is a monoid this coincides with the
	McCord model of the classifying space\cite{mccord-classifying-spaces-and}.
	\item Let $X$ be a compact based space and $M = Gr := \sqcup Gr_n(\RR^\infty)$
	be the infinite Grassmannian with a partial sum defined only for two vector spaces
	which are perpendicular to each other. Then
	$X\otimes Gr = F(X)$ coincides with the configuration space stated in 
	\cite{segal-k-homology-theory}. This configuration space realizes connective $K$-homology.
	In \cite{tamaki-twisting-segal's-k-homology}, a similar construction is given, but it is enriched by an operad
	to make twisting on $K$-theory, thus larger than $X\otimes Gr.$
	
\end{enumerate}
	We denote by $\fin{Y}$ the set of finite subsets of a space $Y$ and
	give it a topology by
	$$\fin{Y} = \{\emptyset \} \sqcup Y \sqcup (Y^2\setminus\Delta)/\Sigma_2 %
	\sqcup (Y^3 \setminus \Delta_3)/\Sigma_3 \sqcup \cdots ,$$
	where $\Delta_n\subset Y^n$ is the fat diagonal.
	We regard $\fin{Y}$ as a partial abelian monoid by disjoint union of sets.
	We can realize configuration spaces in which non-summable labels can occur simultaneously, 
	by using $\fin{Y}.$
\begin{enumerate}
	\setcounter{enumi}{3}
	\item \label{ex:config} If $C_n = \fin{\RR^n}$ then
	$C_n \otimes X  = C_n(X)$
	is the configuration space of finite points in $\RR^n$ with labels in $X$
	\cite{segal-configuration-spaces-and}.
	\item \label{ex:shimakawa}$\fin{Y}\otimes M = C^M(Y)$ is  
	the configuration space of finite points in $Y$ with labels in $M$
	defined in \cite{shimakawa-configuration-spaces-with}. Note that in \cite{shimakawa-configuration-spaces-with}, the notion of
	abelian partial monoid is more general concept than our notion of partial
	abelian monoid.
\end{enumerate}
	Among examples in an extreme case, when $M$ or $N$ is a monoid, we have:
\begin{enumerate}
	\setcounter{enumi}{5}
	\item $ X\otimes \NN = \symproinf{X}$, the infinite symmetric product on a based space $X$ introduced
	in \cite{dold-thom-quasifaserungen-und-unendliche}.
	\item For abelian groups $A, B, $ their product $A\otimes B$ defined here
	is the usual tensor product of modules.
\end{enumerate}	
\end{examples*}
%%%%%%%%%%%%%%%%%%%%%%%%%%%%%%%%%%%%%%%%%%%%%%%%%%%%%%%%%%%%%%%%%
%%%%%%%%%%%%%%%%%%%%%%%%%%%%%%%%%%%%%%%%%%%%%%%%%%%%%%%%%%%%%%%%%
\section{Configuration space of intervals}
\subsection{Configuration space of intervals without labels}

Let $H$ be the half-plane in $\RR^2$ given by
$ H = \{ (u, v) \in \RR^2 ~|~  u \leq v\}$
and $P = \{ \pm 1\}$ be the set of ``parities'' considered as a space with discrete
topology. To any point $( u , v ; p , q )$ in the direct product $H \times P^2$
with $u < v,$
we assign an interval
$$
J = \{ x \in \RR ~|~ u <_p x <_q v\} 
$$
in $\RR,$ where the symbol
``$<_p$'' denotes the inequality ``$\leq$'' if  $p = +1$ and ``$<$'' if $p = -1.$
We also denote $J = \intervalconvention{p}{a , b}{q}.$
If $u = v,$ it is not an interval anymore,  but we allow it only if
$p\neq q$ and call it a degenerate interval. For notational convention, let $\bar{p} = -p$ for any $p \in P.$
Let 
$$
\interval = \{ (u , v ; p, q ) \in H\times P^2~|~u \leq v , ~p\neq q \mbox{~if~}u = v\}
$$
and we identify the element of $\interval$ with the interval assigned to it.
Let $u_L$ and $u_R$ denote the projections $\interval \to \RR$ onto its first and 
second factors respectively, and
$p_L$ and $p_R$ denote the projections onto its third and fourth projections $\interval \to P$ respectively.
Under the identification stated above, $u_L(J)$ or $u_R(J)$ is the coordinate of the left or right
endpoint of $J$, respectively, while $p_L(J)$ or $p_R(J)$ is the parity of the left or right endpoint, 
respectively.
For any two elements $J_1, J_2 \in \interval,$
we denote $J_1 \leq J_2$ if either (1) $u_R( J_1 ) < u_L(J_2)$ or (2) $u_R(J_1) \leq u_L(J_2)$ and $p_R(J_1) \neq p_L(J_2).$
We denote $J_1 < J_2$ if $u_R(J_1) < u_L(J_2).$ 

Let $L_r$ be the subspace of $\interval^r$ given by
$$
L_r = \left\{ ( J_1 , \dots , J_r ) \in \interval^r~\left|~\begin{array}{l}
J_1\leq \dots \leq J_r
%, \mbox{if~}p_i = p_{i+1} \mbox{~then~}J_i < J_{i+1}
\end{array}\right\}\right. _,
$$
Then $L_r$ is the configuration space of $r$ bounded intervals in $\RR$
such that endpoints of the same parity do not collide, with additional limiting
points of creation and annihilation provided by degenerate intervals. 
Later, these additional points are identified with the
empty configuration. Meanwhile, if endpoints of two distinct intervals, one open and the other closed,
collide then two meeting intervals are pasted and the configuration of these two intervals
is identified with the configuration of one interval. 
To establish these identifications, 
let $\sim$ be the least equivalence relation on $\bigsqcup_{r\geq 0} L_r$ which satisfies
the following two relations :
\begin{enumerate}
	\item $( J_1, \dots, J_r ) \sim (J_1, \dots, J_{i-1}, J_{i+1}, \dots , J_r ) $ if $u_L(J_i) = u_R(J_i).$
	\item $( J_1, \dots, J_r ) \sim (J_1, \dots, J_{i-1}, K , J_{i+2} ,\dots , J_r ) $ if $u_R(J_i) = u_L(J_{i+1}),$
	where $K = ( u_L( J_i ) , u_R(J_{i+1}), p_L( J_i ), p_R( J_{i+1}) ).$
\end{enumerate}
Let $I$ be the quotient space of $\bigsqcup_{r\geq 0} L_r$ by this equivalence relation.
The image of $L_0$ is a single point and denoted by $\emptyset,$ which is
treated as the base point of $I.$
Let $L'_r$ be the subspace of $L_r$ given by
$$
L'_r = \left\{ ( J_1 , \dots , J_r ) \in \interval^r~\left|~\begin{array}{l}
J_1< \dots < J_r, \\
u_L(J_i) < u_R(J_{i}) \mbox{~for each~} i = 1,\dots ,r
\end{array}\right\}\right. _.
$$
As sets there is a bijective correspondence between
$\bigsqcup_{r\geq 0} L'_r$ and $I.$ We call 
an element in $L'_r$ corresponding to an element $\xi$ of $I$
the {\bf reduced representative} of $\xi.$
Let $\pi_\interval$ denote the composite $\interval \twoheadrightarrow L_1 \hookrightarrow I.$ 
%%%%%%%%%%%%%%%%%%%%%%%%%%%%%%%%%%%%%%%%%%%%%%%%%%%%%%%%%%%%%%%%%
\subsection{Configuration space of intervals with partially summable labels}

Let $U = (a, b)$ be an open interval in $\RR.$ We consider two special
types of elements in $\multi{\interval \times M}.$
\begin{enumerate}
	\item[(E1)] $e  =  ( J , n ) $ with one of the following :
	\begin{enumerate}
		\item $J = ] a, b [$
		\item $J = ] a, w [$ or $J = ] a, w ],~ a < w < b$ 
		\item $J = ] w, b [$ or $J = [ w, b [,~ a < w < b$ 
		\item $J = \intervalconvention{p}{w_1, w_1}{q}  ,~ a < w_1 < w_2 < b, p+q = 0$ 
	\end{enumerate}
	\item[(E2)] $e =  ( J_1 , n ) \dotplus ( J_2 , n )$ with 
			$J_1  = \left] a , w_1 \right|_p , J_2 =\,_q| w_2 , b [ $ and $a\leq w_1 < w_2 < b$, $p+q=0$ 
\end{enumerate}
where $n$ is a non-zero element in $M,$ which is denoted by $n(e).$
We call such $e$ {\bf an  elementary configuration in $U$.}

Let $ ( J_1, m_1 ) \dotplus \dots \dotplus ( J_r, m_r ) $ be a representative of $\xi \in \multi{\intM}$.
Suppose that $i_1, \dots , i_\lambda$ are all the subscripts $i$ such that $J_i \cap U \neq \emptyset.$
Then let $\xi|_U$ denote the multiset in $\multi{\intM}$ represented by
$ ( J_{i_1}\cap U , m_{i_1} )\dotplus \dots\dotplus (J_{i_\lambda}\cap U , m_{i_\lambda}) ).$

Let $\pi_{mul} : \multi{\intM} \to \multi{I\times M}$ be the map induced by 
$\pi_\interval\times id_M : \intM \to I\times M$ and
let $\pretensor{\interval , M} = \pi_{mul}^{-1}(\pretensor{I, M}) \subset \multi{\interval\times M}.$
An element $\xi \in \pretensor{\interval, M}$ is said to be
{\bf admissible} if for any $t\in \RR$ there exists an open interval $U= (a,b)$ which contains
$t$ such that $\xi|_U$ is represented by $e_1 \dot{+} \dots \dot{+} e_r$ for some elementary
configurations $e_1, \dots , e_r$ in $U$ such that $(n(e_1), \dots , n(e_r)) \in M_r.$
In this case, such a representative
$e_1 \dot{+} \dots \dot{+} e_r$ is called an admissible sum of elementary configurations.
If, moreover, there exist $\ep>0$ and an interval $U$ can be taken as $U = (t-\ep, t+\ep)$ for all $t$,
then we say that $\xi$ is {\bf $\ep$-admissible}.
Let $V = ( a, b )$ be an open interval with $b-a > \ep.$ We say that an $\ep$-admissible 
element $\xi$ is supported by $V$ if $\xi |_{(a+\ep/2, b-\ep/2)} = \xi.$ 
Let $W, W(\ep)$, and $W(\ep, V)$ be the subspace of $\pretensor{\interval, M}$ which consists of 
admissible elements, $\ep$-admissible elements, and $\ep$-admissible elements supported by $V$, 
respectively. 
If $\ep > \ep'$ and $V\subset V',$ then we have a natural inclusion $W(\ep, V) \subset W(\ep', V').$
Let $I_M$ be the image in $I\otimes M$ of $W$
under the natural map $\pi_\otimes \circ \pi_{mul}.$ 
Let also $I_M(\ep)$ and $I_M(\ep, V)$ be the image in 
$I\otimes M$ of $W(\ep)$ and $W(\ep, V)$, respectively , 
 under $\pi_\otimes\circ \pi_{mul}.$ 
We call
$$I_M = \bigcup_{\ep>0, V} I_M(\ep, V).$$
a {\bf configuration space of intervals with partially summable labels,}
where its topology is the weak topology of the union.

\subsection{Thickening, Moore type variant, and the total space}
If $V = ( 0 ,s )$ then let $I_M( \ep , s ) = I_M ( \ep , V )$ and $W( \ep, s ) = W( \ep, V ).$
We define 
\[
\widetilde{I}_M = \bigcup_{\ep > 0, s \geq 0} I_M(\ep, s ) \times \{\ep\}\times\{s\}
\]
and give it the topology as a subspace of $I_M \times \RR^2.$

If $s \leq \ep,$
$I_M(\ep, s )$ consists of one point, the element $\emptyset$ in $I_M$ which represents
the empty configuration.  As a base point of $\widetilde{I}_M,$ we take
$(\emptyset, 1,0).$

\begin{prop}
	The projection $p : \widetilde{I}_M\to I_M$ onto the first component is a weak homotopy equivalence.
\end{prop}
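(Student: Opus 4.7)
My plan is to show $p$ has convex (hence contractible) fibers and use compactness of spheres and disks to handle lifts via the canonical sections $\iota_{\epsilon,s}: I_M(\epsilon,s) \to \widetilde{I}_M$ defined by $\xi \mapsto (\xi, \epsilon, s)$.

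First I would verify two monotonicity properties: (i) $\epsilon$-admissibility of $\xi$ is preserved under shrinking $\epsilon$; and (ii) if $\xi \in I_M(\epsilon, s)$ then $\xi \in I_M(\epsilon', s')$ for every $\epsilon' \leq \epsilon$ and $s' \geq s$. Statement (i) requires a case analysis showing that the restriction of each type (E1) or type (E2) elementary configuration in $(t-\epsilon, t+\epsilon)$ to the smaller centered interval $(t-\epsilon', t+\epsilon')$ is again empty or elementary; the critical observation is that a type (E2) piece $(J_1, n) \dotplus (J_2, n)$ is never split into two separate type (E1) configurations both labelled $n$, since the smaller interval cannot contain one of $w_1, w_2$ without containing both. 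Given these monotonicity properties, the fiber $p^{-1}(\xi) = \{(\epsilon, s) : \xi \in I_M(\epsilon, s)\}$ is cut out by linear inequalities of the form $\epsilon \leq \epsilon_0(\xi)$ and $s \geq b(\xi) + \epsilon/2$, so it is convex and therefore contractible.

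Next I would prove $p_*$ is surjective on every $\pi_n$. Any continuous $f : S^n \to I_M$ has compact image, which by the weak-topology structure lies in some $I_M(\epsilon_0, s_0)$ (using the cofinal sequence $I_M(1/k, k)$); then $\iota_{\epsilon_0, s_0} \circ f$ is a lift. For injectivity, given $\tilde{f} : S^n \to \widetilde{I}_M$ together with a null-homotopy $H : D^{n+1} \to I_M$ of $p \tilde{f}$, compactness places the image of $\tilde{f}$ in some $p^{-1}(I_M(\epsilon_0, s_0))$ and the image of $H$ in some $I_M(\epsilon_1, s_1)$; setting $\epsilon_* = \min(\epsilon_0, \epsilon_1)$ and $s_* = \max(s_0, s_1)$, both fit into $I_M(\epsilon_*, s_*)$. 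Linear interpolation in the $(\epsilon, s)$-coordinates (valid by convexity of the fibers) first homotopes $\tilde{f}$ to $\iota_{\epsilon_0, s_0}(p\tilde{f})$ inside $p^{-1}(I_M(\epsilon_0, s_0))$, then to $\iota_{\epsilon_*, s_*}(p\tilde{f})$, after which $\iota_{\epsilon_*, s_*} \circ H$ yields a null-homotopy of $\tilde{f}$ in $\widetilde{I}_M$.

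The main obstacle will be the case analysis for monotonicity of $\epsilon$-admissibility under shrinking $\epsilon$---especially the verification that a type (E2) configuration is never split into two copies of a potentially self-insummable label $n$. Once this is secured, the rest is standard straight-line deformation in the convex $(\epsilon, s)$-plane.
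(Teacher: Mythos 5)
Your overall strategy --- use compactness to place the image of a map (or homotopy) in a single $I_M(\ep,s)$, then lift by the tautological section $\iota_{\ep,s}$ --- is exactly the approach of the paper. The paper's own proof is terse on the injectivity step: it asserts that the lift $f' = \iota_{\ep,s}\circ(p\tilde{f})$ is homotopic to $\tilde{f}$ but does not say why, and your interpolation argument is an honest attempt to fill that gap. That said, two points in your write-up need repair.

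First, the justification you give for monotonicity in $\ep$ is off. You claim the smaller concentric interval $U'=(t-\ep',t+\ep')$ ``cannot contain one of $w_1,w_2$ without containing both.'' It certainly can: take $w_1=t$ and $t+\ep'<w_2<t+\ep$. The reason no label collision arises is different and simpler: for an (E2) piece $e=(J_1,n)\dotplus(J_2,n)$ with $J_1=\left]t-\ep,w_1\right|_p$ and $J_2={}_q|w_2,t+\ep[$, the two restrictions $J_1\cap U'$ and $J_2\cap U'$ are \emph{both} nonempty exactly when $t-\ep'<w_1<w_2<t+\ep'$, in which case they again form an (E2) in $U'$; in every other case at least one of them is empty and the restriction of $e$ is a single (E1) or empty. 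So an (E2) is never split into two independent (E1)'s, but not for the reason you state. Your conclusion is correct; the cited reason is not.

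Second, you assert that the fibers $p^{-1}(\xi)$ are \emph{convex}, but that is neither proved nor obviously true. The fiber is $\{(\ep,s)\colon\xi\in I_M(\ep,s)\}$, and $I_M(\ep,s)$ is defined as the image of $W(\ep,s)$ under $\pi_\otimes\circ\pi_{mul}$; thus the fiber is the \emph{union}, over all representatives $\eta$ of $\xi$, of the (cone-shaped, convex) sets $\{(\ep,s)\colon\eta\in W(\ep,s)\}$, and unions of convex sets need not be convex. What you actually proved --- and what suffices --- is the monotonicity: if $(\ep,s)$ lies in the fiber and $\ep'\le\ep$, $s'\ge s$, so does $(\ep',s')$. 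To run the injectivity argument with monotonicity alone, choose the target parameters so that the linear segment always moves ``northwest'': since the image of $\tilde{f}$ is compact in $I_M\times\RR^2$, there are $\ep_-\le\ep(x)\le\ep_+$ and $s(x)\le s_+$ for all $x$; put $\ep_*=\min(\ep_-,\ep_1)$ and $s_*=\max(s_+,s_1)$, where the null-homotopy $H$ of $p\tilde{f}$ lands in $I_M(\ep_1,s_1)$. Then $\ep_*\le\ep(x)$ and $s_*\ge s(x)$ for every $x$, so the segment from $(\ep(x),s(x))$ to $(\ep_*,s_*)$ decreases $\ep$ and increases $s$ at each instant, hence stays in the fiber; and $\iota_{\ep_*,s_*}\circ H$ finishes the job. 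Your two-step interpolation through $(\ep_0,s_0)$ is not obviously valid, since $\ep_0$ was chosen only so that $p\tilde{f}$ lands in $I_M(\ep_0,s_0)$, and nothing forces $\ep_0\le\ep(x)$.
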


\begin{proof}
	Let $f : S^n \to I_M$ be a map which represents an element $\alpha \in \pi_n(I_M).$ 
	The image of $f$ is a compact (Hausdorff) subspace of $I_M,$
	since $S^n$ is compact and all our spaces are weak Hausdorff and compactly generated 
	\cite{mccord-classifying-spaces-and}.
	Then we can find $\ep >0$ and $s>0$ such that $I_M(\ep , s )$ contains the image of $f$ (
	weak Hausdorff case of Lemma 9.3 in \cite{steenrod-convenient-category-of}).
	However, we have a homotopy equivalence
	$I_M(\ep ,s ) = I_M(\ep, s ) \times \{\ep\} \times \{s\}
	\hookrightarrow \widetilde{I}_M.$
	Composing this inclusion, we get a map
	$f' : S^n \to \widetilde{I}_M$ which maps to $\alpha$ under $p_*.$	
	 This proves that $p_* : \pi_n(\widetilde{I}_M)\to \pi(I_M)$	is surjective. 
	 
	 For injectivity, suppose $\alpha = [f] \in \pi_n(\widetilde{I}_M)$ maps to  
	 $\pi_*(\alpha) = [g] \in\pi_n(I_M),$ which is zero. Let $H : S^n\times I \to I_M$ be a homotopy 
	 between $g$ and $*.$ Then by the compactness, we
	can assume that the image of $H$ is contained in  $I_M(\ep , s )$ for some $\ep $ and $s.$
	It is similar to the previous case, to show that there exists a map $H' : S^n\times I \to \widetilde{I}_M.$
	$H'$ gives a null homotopy of some map $f' : S^n \to \widetilde{I}_M,$ which is homotopic to $f.$ 
\end{proof}

We proceed to define $E_M.$
For any $J = \intervalconvention{p}{u, v}{q}$ in $\interval,$ we define its ``mirror image'' $J^\mu$ as
$J^\mu = \intervalconvention{\bar{q}}{-v,-u}{\bar{p}}.$ This defines
an involution on $\interval,$ which induces an involution on $I_M,$ which is also denoted by $\mu.$
The space of fixed points $E_M = I_M^\mu$ can be considered as the space of 
intervals in $[0, \infty )$ such that $0\in \RR$ works as a ``vanishing point.''
For $s > 0,$ let $E_M(\ep, s) $ be the space $E_M(\ep, s) = E_M \cap I_M(\ep, (-s, s)).$
Then we define 
\[
\widetilde{E}_M = \bigcup_{\ep > 0, s \geq \ep} E_M(\ep, s ) \times \{\ep\}\times\{s\} \subset E_M \times \RR^2
\]
and give it the relative topology.
If $s = \ep,$
$E_M(\ep, \ep )$ contains only one point, the element $\emptyset$ in $E_M$ which represents
the empty configuration.  As a base point of $\widetilde{E}_M,$ we take
$(\emptyset, 1,1).$

\begin{prop}
	$\widetilde{E}_M$ is contractible.
\end{prop}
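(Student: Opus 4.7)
The plan is to construct an explicit contraction $H : \widetilde{E}_M \times [0,1] \to \widetilde{E}_M$ onto the basepoint $(\emptyset, 1, 1)$ in two stages, using the mirror symmetry of elements of $E_M$ together with the ``vanishing-point'' behaviour at $0$.

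In the first stage, given $(\xi, \ep, s) \in \widetilde{E}_M$, I decompose a representative of $\xi$ into mirror pairs $(J, J^\mu)$ with $J = \intervalconvention{p}{a, b}{q}$, $0 < a < b$, together with symmetric intervals $K = \intervalconvention{p}{-v, v}{\bar p}$ straddling $0$. For $u \in [0, 1]$, translate each such $J$ leftward and each $J^\mu$ rightward at $u$-speed $s$: at $u = a/s$ the two pieces meet at $0$ with opposite parities (forced by mirror symmetry), and the cutting-pasting relation in $I$ identifies the result with the symmetric interval $\intervalconvention{\bar q}{-(b - us), b - us}{q}$; this shrinks further and at $u = b/s$ becomes the degenerate interval $\intervalconvention{\bar q}{0, 0}{q}$, which is identified with $\emptyset$ by the creation-annihilation relation. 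Originally symmetric intervals $K$ obey the same shrinking rule and annihilate at $u = v/s$. Since every interval of $\xi$ lies in $(-s + \ep/2, s - \ep/2)$, all features have annihilated by $u = 1 - \ep/(2s) < 1$, so the endpoint of this stage lies in the ``empty stratum'' $Z = \{(\emptyset, \ep, s) : s \geq \ep > 0\}$.

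The second stage contracts $Z$ to the basepoint via the straight-line homotopy
\[
(\emptyset, \ep, s) \longmapsto (\emptyset, (1-t)\ep + t, (1-t)s + t),
\]
which remains in $\widetilde{E}_M$ since $(1-t)(s - \ep) \geq 0$ for $t \in [0,1]$ and $s \geq \ep$, and ends at $(\emptyset, 1, 1)$.

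The main obstacle is to verify that the first-stage sliding-and-shrinking defines a continuous path into $\widetilde{E}_M$. Continuity across the critical times $u = a/s$ (pasting) and $u = b/s$ (annihilation) is delivered directly by the equivalence relations defining $I_M$, and mirror symmetry ensures the endpoint parities meeting at $0$ are always opposite, so pasting is always permissible. Support by $(-s, s)$ is immediate since the process moves features strictly inward. The delicate technical point is $\ep$-admissibility of the intermediate $\xi_u$: when a pasted interval has half-length below $\ep$, it fails to span any radius-$\ep$ neighbourhood of the origin and no longer admits an expression as a sum of elementary configurations of types (E1) and (E2) for the given $\ep$. This is handled by coupling the sliding-shrinking of $\xi_u$ with a simultaneous continuous adjustment of $(\ep_u, s_u)$ so that $\ep_u$ tracks the half-length of the shortest surviving interval while $s_u \geq \ep_u$ is preserved; once $\xi_u$ becomes empty, $(\ep_u, s_u)$ can be continuously returned to any chosen point of $Z$ before Stage 2 begins. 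A local verification near $0$ using the explicit forms (E1) and (E2) of elementary configurations then completes the argument.
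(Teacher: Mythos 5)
Your construction is essentially the paper's: the paper uses the single piecewise map $h_t(u)=\max(u-ts,0)$ for $u\geq0$ (and its mirror for $u\leq 0$) applied coordinatewise to a $\mu$-invariant representative, which implements exactly your ``slide inward, paste at the origin, shrink, annihilate'' in one formula; your Stage~2 linear homotopy on the empty stratum $Z$ is the step the paper leaves implicit when it asserts $H_t$ ``defines a contraction of $\widetilde{E}_M$.'' So the two arguments agree, with yours being slightly more explicit about the pasting step and about finishing off the base.

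One caveat about the ``delicate technical point'' you flag: your diagnosis of it is not quite right. A pasted symmetric interval $\intervalconvention{\bar q}{-v,v}{q}$ with $v<\ep$ does \emph{not} cease to be expressible in elementary pieces for the given $\ep$ --- since its endpoint parities sum to zero it is precisely a configuration of type (E1)(d) in $(-\ep,\ep)$, and a mirror pair whose translates straddle the $\ep$-window on opposite sides forms a type (E2) pair because $\mu$ flips parities, so the inner parities satisfy $\bar p+p=0$. The only genuinely delicate case is a mirror pair $(J,m),(J^\mu,m)$ of width $<\ep$, neither piece touching $0$ yet, both sitting strictly inside $(-\ep,\ep)$: then each is a separate (E1)(d) and the label $m$ would have to be self-summable. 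Whether this can occur depends on how one reads the $\pretensor{I,M}$ condition; the paper's proof simply asserts ``$h_t(J_i)$ is at least a degenerate interval since our representative is $\mu$-invariant'' and does not address $\ep$-admissibility of $H_t(\xi)$ at all. Your instinct to let $\ep_u$ shrink while staying below the minimum width of the asymmetric intervals is the right fix for this case, but as written it is a sketch, not a verification --- you would need to check that the bound on $\ep_u$ can be chosen jointly continuously in $(\xi,\ep,s,u)$ and stays strictly positive throughout, and the clause about $\ep_u$ ``tracking the half-length of the shortest surviving interval'' is not the correct quantity (that tends to $0$ at the pasting moment; what must stay below a fixed positive bound is $\ep_u$ relative to the widths \emph{before} pasting).

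Summary: correct plan, same route as the paper, with Stage~2 a welcome explicit addition; but the analysis of what can fail in Stage~1, and the continuity of the proposed $\ep_u$-coupling, would need to be tightened before this counts as a complete proof.
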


\begin{proof}
	When $h : \RR\to \RR$ is some increasing function, we put $h(J) = \intervalconvention{p}{h(u) , h(v)}{q}$
	for any interval $J = \intervalconvention{p}{u, v}{q}\in \interval.$
	If $h(u) = h(v)$ then $h(J)\in \interval$ represents a (degenerate) interval only if $p\neq q.$
	Let $h_t : ( -s, s ) \to ( -s, s )  ~(0 \leq t\leq 1)$ be the homotopy defined by
	$$
	h_t( u ) = \left\{
	\begin{array}{cc}
		u- ts & u\geq ts\\
		0 & |u| < ts\\
		u + ts & u\leq -ts
	\end{array}
	\right.
	$$
	Then we define a homotopy $H_{t} : E_M(\ep, s) \to E_M(\ep, s)$ as follows. If
	$\xi =  (J_1, m_1)\dotplus \dots\dotplus (J_r, m_r)   \in \multi{\interval \times M}$ is a 
	$\mu$-invariant representative
	of $\xi \in E_M(\ep, s)$ then $H_t(\xi)$ be the element of $E_M(\ep,s)$ represented by
	$$
		H_t(\xi) = (h_t(J_1), m_1)\dotplus \dots\dotplus (h_t(J_r) , m_r).
	$$
	For each $i,$ $h_t(J_i)$ is at least a degenerate interval since our representative is $\mu$-invariant.	
	It is easy to see that $H_t$ defines a contraction of $\widetilde{E}_M.$
\end{proof}

We define $d : \multi{\interval \times M} \to  \multi{\interval \times M}$ by
$$d(  ( J_\lambda, m_\lambda )_\lambda ) = ( J_\lambda, m_\lambda )_\lambda \dotplus   ( J^\mu_\lambda, m_\lambda )_\lambda. $$
By restricting $d$ to $\adm{\ep, s},$ we define an embedding $I_M(\ep,s) \to E_M(\ep, s)$ then an embedding $\widetilde{I}_M \to \widetilde{E}_M.$
In the following sections, we only give definitions and proofs for the case $\ep = 1.$
Required changes for other cases are obvious. 
%%%%%%%%%%%%%%%%%%%%%%%%%%%%%%%%%%%%%%%%%%%%%%%%%%%%%%%%%%%%%%%%%%%%%%%%
\section{Construction of $\alpha : \widetilde{I}_M \to \Omega' BM$}
We are going to define a map $\alpha : \widetilde{I}_M \to \Omega' BM$
 in the following steps:
Let $U_t = ( t - 1,  t + 1 )$ and $V_t = ( t -1/2, t + 1/2 )$ and
$\adms = \adm{1,s} \times \{s\}.$
\begin{enumerate}
	\item We can define a map $\omega(J) : V_t\to S^1$ for each interval $J\in (H\cap U^2_t)\times P^2.$
	Then we define
	$$\omega_t : \adms \to \multi{\map{V_t, S^1} \times M}.$$
	Let $G$ denote the image of $\adms$ under $\omega_t.$

	 \item  \label{stepmu}
	 We can define a map $s_t : G \to \multi{\map{V_t, S^1} \times M}.$
	 We denote its image by $G'.$
	\item $G'$ is mapped into $\map{V_t, \pretensor{S^1, M}}$ under the sequence of natural maps
	\begin{eqnarray*}
		\multi{\map{V_t, S^1} \times M} & \to & \multi{\map{V_t, S^1\times M} } \\
												& \to & \map{V_t, \multi{S^1 \times M} } .  
	\end{eqnarray*}
	
	\item Composition of maps given in (1)$\sim$(3) and the map induced by
	the quotient $\pretensor{S^1, M} \to S^1\otimes M = BM$
	gives us a map 
	$\alpha_t : \adms \to \map{V_t, BM}.$ 
	
	\item 	$\alpha_t ( [\xi])  \in \map{ V_t, \pretensor{S^1, M}}$
	for two distinct $t\in [0, s]$ coincide on their intersection of the domain of definition, so we
	can achieve extension to get an element in
	$\Omega'_s\pretensor{S^1, M}.$ Thus we get a map	
	$$  \adms  \to \Omega'_s BM$$
	
	\item We can take quotient on the source to define a map
	$\alpha(\ep, s) :  I_M(\ep, s) \to \loopsBM.$
	\item We consider all the $s > 0$ and 	
	take a union	to get a continuous map $\alpha : \widetilde{I}_M \to \Omega' BM.$
\end{enumerate}
Continuity of the maps are clear from its construction.

\begin{center}
\begin{tikzcd}
		\adms \ar{r}{\omega_t}\ar[end anchor = north west]{rdddd}[left]{\alpha_t} %
																										& G \ar{d}{s_t}%
													\ar[hook]{r} & \multi{\map{V_t, S^1} \times M}
\\
												& G' \ar[hook]{r}\ar{dd}  & \multi{\map{V_t, S^1} \times M}\ar{d}\\
												 &				&	\multi{\map{V_t, S^1\times M} }\ar{d} \\
												 &	\map{V_t, \pretensor{S^1, M} }\ar[hook]{r}\ar{d} 	&	\map{V_t, \multi{S^1\times M} }\\
												 & \map{V_t, BM }.\\
\end{tikzcd}
\end{center}
%%%%%%%%%%%%%%%%%%%%%%%%%%%%%%%%%%%%%%%%%%%%%%%%
\subsection{A map $\omega_t : \adms \to \multi{\map{V_t, S^1}\times M}$}
Let $\mathcal K$ be the subspace of  $\interval $ defined by
$$\mathcal K = \{ J = \intervalconvention{p}{ u, v}{q} ~|~ v-u > 1 \mbox{~if~} p=q\}$$ 
For any $J =  \intervalconvention{p}{ u, v}{q} \in\mathcal  K,$ let $\Omega(J) : \RR \to S^1$ be 
a map defined as follows:
\begin{enumerate}
	\item if $v - u > 1$ then
		$$\omega (J)(s) = \left\{ 
			\begin{array}{ll}
				p(s-u-1/2) & (\mbox{if~} u-1/2 < s \leq u+1/2)\\
				0 & (\mbox{if~} u+1/2 < s \leq v-1/2)\\
				q(s-v+1/2) & (\mbox{if~} v - 1/2 < s \leq v + 1/2)\\
				1 & (\mbox{otherwise})
			\end{array}
		\right.$$
	\item if $v - u \leq 1$ then (in this case $p+q = 0$)
		$$\omega (J)(s) = \left\{ 
			\begin{array}{ll}
				p(s-u-1/2) & (\mbox{if~} u - 1/2 < s \leq v - 1/2)\\
				p(v-u-1) & (\mbox{if~} v - 1/2 < s \leq u + 1/2)\\
				q(s-v+1/2) & (\mbox{if~} u + 1/2 < s \leq v+1/2)\\
				1 & (\mbox{otherwise})
			\end{array}
		\right.$$
\end{enumerate}
The correspondence $J\mapsto \omega(J)$ gives us map $\omega :\mathcal  K \to \map{\RR^1, S^1}.$
We are now going to define a map $\omega_t : W_s \to \multi{\map{V_t, S^1}\times M}.$
\begin{enumerate}
\item If $e =  ( J , m ) $ is an elementary configuration in $(t-1,t+1)$ of type (E1),
\begin{enumerate}
	\item we put $\tilde{J} = J$
	\item if $J = ] a, w [$  then we put $\tilde{J} = [a, w[,$ otherwise $\tilde{J} = J,$
	\item if $J = ] w, b [$  then we put $\tilde{J} = ]w, b]$ otherwise $\tilde{J} = J,$
	\item we put $\tilde{J} = J$
\end{enumerate}
We put $\tilde{e} =  ( \tilde{J} , m )  \in \multi{\interval\times M}.$ 
\item If $f =  (K, n)\dotplus (K',n) $ is an elementary configuration in $]t-1,t+1[$ of type (E2),
\begin{enumerate}
	\item $\tilde{K} = K,$ and if $K' = ] w_2, b [$  then we replace it with $\tilde{K'} = ] w_2, b],$ otherwise $\tilde{K'} = K',$
	\item $\tilde{K'} = K,$ and if $K = ] a,  w_1 [$  then we replace it with $\tilde{K} = [a , w_1[,$ otherwise $\tilde{K} = K,$
\end{enumerate}
\end{enumerate}
We put $\tilde{f} =  ( \tilde{K} , n )\dotplus (\tilde{K'},n)  \in \multi{\interval\times M}.$ 
Resulting element $\tilde{e} \in \multi{\interval\times M}$ is called the replacement of $e.$
For $\xi = e_1 \dotplus \dots \dotplus e_r \dotplus f_1 \dotplus \dots \dotplus f_s,$
let $r(\xi ) = \tilde{e}_1 \dotplus \dots \dotplus \tilde{e}_r \dotplus \tilde{f}_1 \dotplus \dots \dotplus \tilde{f}_s.$

We can define
$\omega_t : \adms \to \multi{\map{V_t, S^1}\times M}$
by the following diagram:
\begin{center}
\begin{tikzcd}
	\adms \ar{r}{r} \ar{rrd}[below]{\omega_t}&
	\multi{\mathcal K\times M} \ar{r} & 
	\multi{\map{\RR^1, S^1}\times M} \ar{d}{\mbox{res}}\\
	 & & \multi{\map{V_t, S^1}}_,
\end{tikzcd}
\end{center}
where the second map in the upper horizontal line is $\multi{\omega\times id_M}.$
This map is essentially the one defined in \cite{okuyama-space-of-intervals} for the 
case of the space of intervals with labels in a trivial partial abelian monoid.
Let $G$ denote the image of $\omega_t.$
%%%%%%%%%%%%%%%%%%%%%%%%%%%%%%%%%%%%%%%%%%%%%%%%
\subsection{A map $s_t : G \to \multi{\map{V_t, S^1} \times M}$} 

We can define a `sum' $s_t : G \to \multi{\map{V_t, S^1} \times M}.$
For any element $\xi\in \adms,$ $\xi_t = \xi|_{U_t}$ is of the form
	$$\xi_t = e_1 \dotplus \dots \dotplus e_r \dotplus f_1 \dotplus \dots \dotplus f_s,$$
	where $e_i$ is an elementary configuration of type (E1) for each
	$1\leq i\leq r$ and $f_j$ is that of type (E2) for each $1\leq j \leq s.$ 
	Let $e_i =  ( J_i, m_i ) $ and $f_j =  ( K_j, n_j )\dotplus ( K'_j, n_j )$ so that 
	each $J_i$ is one of (F0)$\sim$ (F3) 
	and each $K_i$ is (F1) and each $K'_i$ is (F2) shown below.
\begin{enumerate}
	\item[(F0)] $J = ] t-1, t+1 [,$
	\item[(F1)] $J = ] t-1, w [$ or $J = ] t-1 , w ]$, where $t-1 < w < t+1,$
	\item[(F2)] $J = ] w, t+1 [$ or $J = [ w, t+1 [$, where $t-1 < e < t+1,$
	\item[(F3)] $J = ] w_1 , w_2 ]$ or $ J = [ w_1 , w_2 [,$ where $t-1 < w_1 \leq w_2 < t+1.$
\end{enumerate}
Accordingly, $\omega_t(\xi_t) \in G$ is represented by
$$
 (\varphi_1, m_1) \dotplus\dots\dotplus(\varphi_r , m_r) 
\dotplus
 (\psi_1, n_1) \dotplus (\psi', n_1 )\dotplus\dots \dotplus (\psi_s, n_s) \dotplus (\psi'_s, n_s),
$$
where $\varphi_i = \omega(J_i)|_{V_t}$ and $\psi_j = \omega(K_j)|_{V_t}, \psi'_j = \omega(K'_j)|_{V_t}.$

\begin{lem}\label{lem:unique-representation}
Suppose that every non-zero element of $M$ is self-insummable.
Then any element of $G$ is of the form
$$
 (\varphi_1, m_1) \dotplus\dots\dotplus(\varphi_r , m_r) 
\dotplus
 (\psi_1, n_1)\dotplus (\psi', n_1 )\dotplus\dots \dotplus (\psi_s, n_s) \dotplus (\psi'_s, n_s) ,
$$
where $\varphi_i = \omega(J_i)$ is an image of an element of type (E1) 
and $\psi_j = \omega(K_j), \psi'_j = \omega(K'_j)$ constitute a pair of an image
of an element of type (E2).
This representation is unique up to order.
\end{lem}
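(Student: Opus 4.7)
My plan is to handle existence by tracing the construction of $\omega_t$ on admissible elements, and to extract uniqueness from the observation that self-insummability forces the labels occurring in any admissible sum to be pairwise distinct non-zero elements of $M$. Concretely, given $\eta \in G$, choose $\xi \in \adms$ with $\omega_t(\xi) = \eta$. By admissibility the restriction $\xi|_{U_t}$ can be written as $e_1 \dotplus \cdots \dotplus e_r \dotplus f_1 \dotplus \cdots \dotplus f_s$ with each $e_i$ elementary of type (E1), each $f_j$ elementary of type (E2), and with the label tuple $(m_1, \dots, m_r, n_1, \dots, n_s)$ summable in $M$. Applying $\omega_t$ summand by summand (via the replacement operation $r$ followed by $\omega$) immediately gives $\eta$ in the form claimed, with $\varphi_i = \omega(\tilde{J}_i)|_{V_t}$ and with the pair $\psi_j, \psi'_j$ coming from the two intervals comprising $f_j$.

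For uniqueness, suppose $\eta$ is presented in the form of the lemma. Each $m_i$ and each $n_j$ is non-zero by the definition of elementary configuration, and the whole tuple $(m_1, \dots, m_r, n_1, \dots, n_s)$ is summable. Any pair drawn from a summable tuple is itself summable (an immediate consequence of the associativity axiom for a partial abelian monoid), so self-insummability forbids any two of these labels from coinciding. Projecting the multiset $\eta \in \multi{\map{V_t, S^1} \times M}$ onto its second coordinates therefore yields a multiset in $M$ in which each $m_i$ appears with multiplicity exactly one and each $n_j$ with multiplicity exactly two. The labels of multiplicity one single out the (E1) contributions: for such a label $m$, the unique summand with that label recovers $(\varphi, m)$. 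The labels of multiplicity two single out the (E2) contributions: for such an $n$, the two summands with that label supply the unordered pair $\{(\psi, n), (\psi', n)\}$. This partition is read off from $\eta$ alone, independently of any decomposition chosen a priori, which yields the stated uniqueness up to order.

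The main obstacle is the distinctness-of-labels reduction in the second paragraph, which must be derived cleanly from the bare axioms of a partial abelian monoid together with the hypothesis that every non-zero element of $M$ is self-insummable; once that reduction is secured, the remainder is straightforward bookkeeping with multiplicities in the multiset structure underlying $G$.
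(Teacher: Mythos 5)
Your proof is correct and follows the paper's own approach: both project to the label multiset in $\multi{M}$ and exploit the fact that self‑insummability, combined with summability of the tuple $(m_1,\dots,m_r,n_1,\dots,n_s)$, forces all labels to be pairwise distinct, so multiplicity one versus multiplicity two distinguishes (E1) from (E2) contributions. The paper states this tersely ("since $(n_i,n_i)\notin M_2$, it is easy to see…") whereas you spell out the intermediate step that any pair extracted from a summable tuple is summable; note only that this step uses commutativity as well as associativity, not associativity alone as you parenthetically suggest.
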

\begin{proof}
	Let $m_1\dotplus \dots \dotplus m_r \dotplus n_1\dotplus n_1\dotplus \dots \dotplus n_s\dotplus n_s$ be an element of
	$\multi{M}$ such that $(m_1, \dots ,m_r , n_1 ,\dots ,n_s) \in M_{r+s}.$ Since, by assumption, $(n_i , n_i ) \not\in M_2,$ it
	is easy to see that this representation is unique.
\end{proof}

Let $ (\psi, n )\dotplus ( \psi', n ) \in \multi{\map{V_t, S^1}\times M}$ be the image of an
elementary configuration $(K, n)\dotplus (K', n) $ of type (E2).
Let 
\[ 
K = \left] t-1 , v \right|_{q} \mbox{~and~} K' = \,_{\bar{q}}| u', t+1 [.
\]
Then we define
$\psi \oplus \psi' : V_t \to S^1$
by
\[
	(\psi \oplus \psi') (s) = %
	\left\{\begin{array}{ll}
				\psi(s)  & (\mbox{if~} s \leq u'-1/2)\\
				q (u'-v) & (\mbox{if~} u'- 1/2 \leq s < v+1/2)\\
				\psi'(s) &  (\mbox{if~} s > v_1+1/2).\\
			\end{array}\right. %_.
\]
Now a map $s_t : G \to \multi{ \map{ V_t, S^1} \times M   }$ is defined by
\begin{eqnarray*}
s_t ( \omega (\xi_t ) ) & = & \sum_{i}   (\omega(J_i), m_i) 
			  \dotplus \sum_{j}  (\omega(K_j)\oplus \omega(K'_j), n_j) .
\end{eqnarray*}

	It is clear that $s_t : G \to \multi{ \map{ V_t, S^1} \times M}$
	is continuous.
	 
\subsection{Final steps}
	It is readily seen that
	$G'$ is mapped into $\map{V_t, \pretensor{S^1, M}}$ under the sequence of natural maps
	\begin{eqnarray*}
		\multi{\map{V_t, S^1} \times M} & \to & \multi{\map{V_t, S^1\times M} } \\
												& \to & \map{V_t, \multi{S^1 \times M} } .  
	\end{eqnarray*}
Thus $\mu\circ\omega_t $ followed by this sequence gives a map
$ \adms \to \map{ V_t, \pretensor{S^1, M} }.$
Next, we	take the quotient $\pretensor{S^1, M} \to S^1\otimes M = BM,$ then maps altogether 
gives us a map $\alpha_t : \adms \to \map{V_t, BM}.$ 
Let $[\xi] \in \adms,$ then
	$\alpha_t ( [\xi ])  \in \map{ V_t, BM}$
	for two distinct $t\in [0, s]$ coincide on their intersection of the domain of definition and
	we can take the union of them to get an element in
	$\Omega'_s BM.$ Thus we get a map	
	$  \adms  \to \loopsBM.$

	However, this map factors through the natural quotient map
	$\adms \to \ims{s}$ as the following diagram, 
	\begin{center}
	\begin{tikzcd}
		\adms \ar{r} \ar{d}& \Omega'_s BM\\
		 \ims{s}\ar{ru}[below right]{\alpha(s)}
	\end{tikzcd}
	\end{center}
	so we have a map $\alpha(s) :  \ims{s} \to \loopsBM.$
	
	Finally, we consider all the $s > 0$ and 	
	take a union	to get a continuous map $\alpha : \widetilde{I}_M \to \Omega' BM.$
	
	Let $P' X$ denote the space of Moore paths on $X$;
\[
	P'X = \{ f : [ 0, s ] \to X ~|~ \mbox{continuous} , f(s) = *\}.
\]
Then a map $\beta : \widetilde{E}_M \to P'BM$ is defined by using $\alpha^\ep_{(-s,s)}.$
A map $p : \widetilde{E}_M \to BM$ is defined by the composite
$\widetilde{E}_M \to P'BM \to BM,$
where $P'BM \to BM$ is the evaluation at $0.$
	\begin{center}
	\begin{tikzcd}
		\widetilde{E}_M \ar{r}{\beta} \ar{rd}[below]{p} & P' BM \ar{d}{ev_0}\\
		& BM.
	\end{tikzcd}
	\end{center}

%%%%%%%%%%%%%%%%%%%%%%%%%%%%%%%%%%%%%%%%%%%%%%%%%%
%%%%%%%%%%%%%%%%%%%%%%%%%%%%%%%%%%%%%%%%%%%%%%%%%%
\section{Proof of the main theorem}
Proof of the main theorem is based on the following proposition.
\begin{prop}\label{prop:quasifibration}
Let $M$  be a (discrete) self-insummable partial abelian monoid.
The map $p : \widetilde{E}_M \to BM$ is a quasifibration with fiber $\widetilde{I}_M.$ 
\end{prop}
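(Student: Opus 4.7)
The plan is to apply the Dold--Thom quasifibration criterion to a natural filtration of the base $BM = S^1\otimes M$. Let $F_n BM$ denote the image of configurations of cardinality at most $n$, so that $F_0 BM = \{*\}$ and $BM = \bigcup_n F_n BM$. I would argue by induction on $n$ that the restriction of $p$ to $p^{-1}(F_n BM)$ is a quasifibration, and then take a colimit.

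For the base case, I need to identify $p^{-1}(*)$ with $\widetilde{I}_M$ via the doubling embedding $d$. Given $\xi\in\widetilde{I}_M$, the $\mu$-symmetry of $d(\xi)$ together with the explicit formula for $\omega$ forces $\beta(d(\xi))(0)$ to be the trivial class: the window $V_0 = (-1/2,1/2)$ meets only $\mu$-paired intervals whose $\omega$-contributions, combined by $s_t$ and projected to $S^1\otimes M$, produce the basepoint. Conversely, if $\eta \in \widetilde{E}_M$ satisfies $p(\eta) = *$, then Lemma~\ref{lem:unique-representation} (invoking self-insummability of $M$) forces the elementary decomposition of $\eta$ near $0$ to be $\mu$-symmetric, so $\eta = d(\xi)$ for a unique $\xi\in\widetilde{I}_M$.

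For the inductive step, I apply the Dold--Thom lemma with a suitable open neighborhood $U_n$ of $F_{n-1}BM$ in $F_nBM$, together with a deformation $h_t\colon U_n \to U_n$ retracting $U_n$ onto $F_{n-1}BM$ by shrinking short arcs of $S^1$ until colliding points are partially summed, reducing cardinality. I would lift this to a deformation $H_t\colon p^{-1}(U_n) \to p^{-1}(U_n)$ by performing the analogous shrinkage on the intervals of $\widetilde{E}_M$ that scan to the arcs in question, using the flexibility in the thickening parameters $\epsilon$ and $s$ to maintain admissibility throughout. Over the open stratum $F_nBM \setminus F_{n-1}BM$, local triviality is direct: a representative with exactly $n$ distinct labeled points can be locally separated, exhibiting $p^{-1}$ of a small neighborhood as a product.

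The main obstacle will be to verify that for each $u\in U_n$ the induced map on fibers $p^{-1}(u)\to p^{-1}(h_1(u))$ is a weak equivalence. The candidate homotopy inverse enlarges the support parameter $s$ in $\widetilde{E}_M$ so as to reinsert a small ``ghost'' configuration recovering the collapsed intervals, and one must construct explicit homotopies between the two composites and the respective identities. Self-insummability is essential here, since Lemma~\ref{lem:unique-representation} guarantees that the decomposition into elementary configurations used by the $\alpha$-construction is unambiguous, so the absorb-and-release procedure is well defined on the nose rather than only up to higher coherence. A secondary technicality is that the filtration $F_n BM$ need not consist of closed subspaces on which the usual Dold--Thom hypotheses apply verbatim, so one must work with an auxiliary open filtration (by tubular neighborhoods of the strata) and pass to colimits carefully.
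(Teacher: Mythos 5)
Your plan mirrors the paper's strategy: induct on the filtration $F_l BM$, build a deformation of an open neighborhood of $F_{l-1}BM$ onto $F_{l-1}BM$ that lifts to $p^{-1}$, check the open stratum is distinguished, and conclude by Dold--Thom's Satz 2.15. The ``reinsert a ghost configuration'' idea is also the right homotopy inverse in spirit. However, the proposal leaves the single hardest step in a state that is not merely under-detailed but structurally incomplete.

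The fibers $p^{-1}(z)$ are disconnected, and this is the crux. Writing $z = 0\otimes m_0 + t_1\otimes m_1 + \cdots + t_s\otimes m_s$, a preimage must allocate each label $m_i$ between a type-(E1) elementary configuration and a type-(E2) one. Consequently $p^{-1}(z)$ breaks into subspaces $F_\alpha$ indexed by $\alpha \in P(m_1)\times\dots\times P(m_s)$, where $P(m)=\{(a,b)\in M_2 : a+b=m\}$, and distinct $\alpha$ give subspaces that cannot be joined by a path in $p^{-1}(z)$. The assertion that $H_1\colon p^{-1}(z)\to p^{-1}(h_1 z)$ is a weak equivalence must therefore be verified component by component: one first deformation-retracts each $F_\alpha$ onto a rigid normal form $H_\alpha$ (this is Lemma~\ref{lem:deformation_retraction} in the paper), and only then can a reinsertion map $g\colon H_{\alpha'}\to F_\alpha$ be compared with $H_1|_{F_\alpha}$. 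Without naming this decomposition, your claim that ``the absorb-and-release procedure is well defined on the nose'' does not close the argument: there is no single absorb-and-release map on $p^{-1}(z)$, only one per component, and you need a reason the components on the two sides match up. This is also where self-insummability actually earns its keep --- it makes the (E1)/(E2) allocation of each label unambiguous, so the $F_\alpha$ are genuinely disjoint --- rather than merely making the $\alpha$-construction well defined.

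Two smaller points. ``Local triviality is direct'' over $V = F_l BM\setminus F_{l-1}BM$ overstates the case: $p^{-1}V$ is not a product, and the paper has to construct an explicit homotopy equivalence $p^{-1}V\simeq V\times\widetilde{I}_M$ by translating the positive part of a configuration and welding on a ``cap'' of half-open intervals, with the standard-lift construction as the inverse; some such argument is required. And your worry about closedness of $F_n BM$ is unfounded --- each $F_n BM$ is closed in $BM$ and the filtration is exactly the setting of Satz 2.15, so no auxiliary open filtration is needed.
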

Assuming the above proposition, we can give a
%%%%%%%%%%%%%%%%%%%%%%%%%%%%%%%%%%%%%%%%%%%%%%%%%%%%%%%%%%%%%%%%%%%%%%%%%%%%%%%%%%%%%%%%%%%
\begin{proof}[Proof of Theorem 1]
In the following diagram,
\begin{center}
\begin{tikzcd}
		\widetilde{I}_M \ar{r}\ar{d} & \widetilde{E}_M\ar{r}\ar{d}  & BM \ar{d}\\
		\Omega'BM \ar{r}& P' BM \ar{r}& BM\\
\end{tikzcd}
\end{center}
lower horizontal line is the path-loop fibration and
vertical map on the right is identity map and vertical map in the middle is 
a weak homotopy equivalence, since it is 
a map between weakly-contractible spaces, 
hence so is the vertical map on the left.
\end{proof}
%%%%%%%%%%%%%%%%%%%%%%%%%%%%%%%%%%%%%%%%%%%%%%%%%%%%%%%%%%%%%%%%%%%%%%%%%%%%%%%%%%%%%%%%%%%
This section is devoted to giving a proof of
Proposition \ref{prop:quasifibration}.
%%%%%%%%%%%%%%%%%%%%%%%%%
\subsection{Dold-Thom criterion (1)}
For any element $\xi$ of $E_M,$ we can find a representative of the form 
$$S_- \dotplus S_0 \dotplus S_+\in 
\multi{\interval\times M},$$
which satisfies the following conditions:
\begin{enumerate}
	\item $S_+|_{( 0,\infty )} = S_+$
	\item $(S_-)^\mu = S_+$
	\item $S_0$ is a multiset of the following form
	$$ (J_1, m_1)\dotplus \dots\dotplus (J_r, m_r)$$
	where $J_i = \intervalconvention{\bar{p_i}}{-u_i, u_i}{p_i}$ for some $u_i >0$ and $p_i \in P$ for
	each $ i= 1, \dots , r .$
\end{enumerate}
We can also take such a representative in a reduced form.
Then from $S_0$ we construct a new multiset
\[
	S'_0 =  ( K_1, m_1)\dotplus \dots\dotplus (K_r, m_r) 
\]
with 
$K_i = [ 0 , u_i |_{p_i}.$ We call the element 
$S'_0 \dotplus S_+ \in \multi{\interval \times M}$
 {\bf a representative of the positive part} of $\xi.$

In the following, we use the standard filtration $F_l BM~(l\geq 0)$ of 
$BM = S^1 \otimes M,$ appeared in \S 2. 
%The following lemma establishes the first half of Dold-Thom criterion.
\begin{lem}\label{lem:key-lemma1}
	Let $l \geq 1.$
	If we denote $V = F_l BM \setminus F_{l-1} BM,$ then there exists a homotopy
	equivalence $p^{-1} V \simeq V\times \widetilde{I}_M.$ 
	Accordingly, $V$ is a distinguished set with respect to $p.$
\end{lem}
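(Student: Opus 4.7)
The plan is to exhibit mutually inverse maps
\[
\Phi : p^{-1}V \to V \times \widetilde{I}_M, \qquad \Psi : V \times \widetilde{I}_M \to p^{-1}V,
\]
inverse up to homotopy, with $\Phi$ covering the projection to $V$. First I would exploit the canonical representative $S_- \dotplus S_0 \dotplus S_+$ of \S 5.1 together with Lemma \ref{lem:unique-representation}. By self-insummability, any $x \in V = F_l BM \setminus F_{l-1} BM$ is uniquely described by $l$ distinct non-basepoint labeled points of $S^1 \times (M \setminus 0)$ with pairwise insummable labels. A direct computation with the formulas for $\omega$ in \S 4.1 shows that for a symmetric interval $J = \intervalconvention{\bar p}{-u, u}{p}$ with $u > 0$, the value $\omega(J)(0)$ is non-basepoint in $S^1$ and is determined by $(u, p)$. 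Hence for $\xi \in p^{-1}V$, the multiset $S_0$ has exactly $l$ entries $J_i = \intervalconvention{\bar{p_i}}{-u_i, u_i}{p_i}$ in correspondence with the $l$ particles of $p(\xi)$.

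Given $(x, \eta) \in V \times \widetilde{I}_M$, define $\Psi(x, \eta)$ by solving $\omega(\cdot)(0) = z_i$ for canonical endpoints $u_i$ (take the unique $u_i \in (0, 1/2)$ with appropriate $p_i$ when $z_i$ is not the equator point of $S^1$, and $u_i = 1/2$ otherwise), assembling the $l$ symmetric intervals near $0$ as $S_0$, translating $\eta$ to have support past $\max_i u_i + \varepsilon$, doubling by the mirror construction $d$, and taking the union with $S_0$; the thickening parameters are inherited. Define $\Phi(\xi) = (p(\xi), \eta)$ by extracting $p(\xi) \in V$ and taking $\eta \in \widetilde{I}_M$ to be the $S_+$-part of $\xi$ with its support translated to start at $0$. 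Then $\Phi \circ \Psi = \mathrm{id}$ by construction, while $\Psi \circ \Phi \simeq \mathrm{id}$ via a continuous homotopy that slides the actual endpoints $u_i$ of $S_0$ along the level sets $\omega(J_i)(0) = z_i$ to their canonical values and simultaneously translates $S_+$ accordingly, keeping the deformation inside $p^{-1}V$ throughout.

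The main obstacle is the construction of a globally continuous canonical section $V \to (\text{choices of } (u_i, p_i))$. The local inverse of $(u, p) \mapsto \omega(J)(0)$ has branch-like behaviour: as $z_i$ approaches the basepoint of $S^1$ the corresponding $u_i$ tends to $0$, while the preimage of the equator $z = 0 \in S^1$ is the whole locus $u \geq 1/2$, so a single-valued section of $u$-values is not continuous everywhere. This forces the conclusion to be a homotopy equivalence rather than a homeomorphism: the branch ambiguity is absorbed by a compensating deformation of the $\widetilde{I}_M$ factor, whose starting support depends on $\max_i u_i$. Compatibility with the thickening parameters $(\varepsilon, s)$ is routine, provided the translations and doublings are carried out with bookkeeping to preserve $\varepsilon$-admissibility and support conditions; this is what guarantees the last sentence of the lemma, namely that $V$ is distinguished with respect to $p$.
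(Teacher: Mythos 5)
Your high-level plan --- exhibiting maps $\Phi: p^{-1}V \to V\times\widetilde{I}_M$ and $\Psi: V\times\widetilde{I}_M\to p^{-1}V$ that are homotopy-inverse, with $\Phi$ covering the projection --- is the right shape, but the specific construction you sketch has gaps that the paper's proof is designed to avoid. First, your claim that $S_0$ has exactly $l$ entries in bijection with the $l$ particles of $p(\xi)$ is not correct: $p(\xi)=\alpha(\xi)(0)$ receives contributions from \emph{every} interval $J$ in the $\mu$-invariant representative with $u_L(J)<1/2$ and $u_R(J)>-1/2$, so intervals in $S_+$ (and their $\mu$-mirrors in $S_-$) lying close to the origin also show up in $p(\xi)$, not just the symmetric ones in $S_0$. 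Consequently your $\Phi$, which discards $S'_0$ and keeps only a translated $S_+$, cannot be inverted merely by reconstructing $S_0$ from $p(\xi)$. Second, you correctly identify the branch ambiguity in $(u,p)\mapsto\omega(J)(0)$, but the ``compensating deformation'' you invoke is never constructed, and that is exactly where the real work lies: when $\omega(J)(0)=0$ the preimage is the whole ray $u\geq 1/2$, and a homotopy ``sliding along the level sets'' must be shown to be continuous as $u$ crosses $1/2$ while simultaneously translating the rest of the configuration out of the way and remaining inside $p^{-1}V$; none of this is supplied, and it is not routine.

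The paper's proof sidesteps both difficulties. It never tries to invert $\omega(\cdot)(0)$: $\varphi$ keeps the \emph{entire} positive part $S'_0\dotplus S_+$, translates it into $(2,\infty)$, and glues on a ``cap'' $C$ of length-$1$ left-open intervals placed at $\left] 1-u_L(J_i),\, 2-u_L(J_i)\right|$ for precisely those $J_i$ with $u_L(J_i)\leq 1/2$ --- thereby registering all near-origin intervals, whether they come from $S'_0$ or $S_+$, and producing an element of $\widetilde{I}_M$ continuously. The reverse map $\psi$ inserts a canonical ``standard lift'' of $v\in V$ built from length-$1$ half-open intervals and then doubles by $\mu$, so no branch-dependent choice is made. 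Both composites are then shown to be homotopic to the identity by the \emph{same} explicit deformation $h'_t$, which collapses an expanding neighbourhood of $2\in\RR$ and pushes everything outward --- not by a level-set-sliding argument. If you want to repair your proposal, the essential missing ingredient is this cap/standard-lift mechanism with a fixed length scale, which makes the reconstruction from $p(\xi)$ unnecessary and renders the continuity questions tractable.
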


\begin{proof}
 We define a map $\varphi : p^{-1}V\to \widetilde{I}_M$ by
taking the positive part and attach a ``cap'' on the left.
The cap consists of 
left-open intervals of length 1, which is contained in
 $( 0, 2 )$. To state the construction precisely,
let $ (p^{-1} V)_s$ denote the subspace
$p^{-1} V \cap \ems{s}$ of $p^{-1}V.$
We define a map
$\varphi_s : (p^{-1}V)_s \to \ims{s+2}$ as follows.
Let $(\xi, s ) \in (p^{-1}V)_s$ and let
$S'_0 \dotplus S_+$ be a representative of the positive part of $\xi.$
Among all the intervals occurring in this representation, we collect
ones which have $u_L(J) \leq 1/2.$ There should be exactly
$j$ of such intervals in number, so we may assume that they constitute 
a multiset
$$
(J_1, m_1) \dotplus \dots\dotplus (J_j, m_j).
$$
Then the ``cap'' of $S'_0 \dotplus S_+$ is the multiset 
$$
C =  (K_1, m_1)\dotplus \dots \dotplus (K_j, m_j) 
$$
where 
$K_i = \left] 1 - u_L(J_i), 2 - u_L(J_i) \right|_{p}, p = \overline{p_L(J_i)}.$
Now, $\varphi_s ( \xi )$ be the element of $\ims{s+2}$ represented by
$$
C \dotplus \translation{2}(S'_0 \dotplus S_+ ),
$$
where $\translation{2}$ denotes the translation by $2.$
Now
$\varphi : p^{-1} V \to V\times \widetilde{I}_M $ is defined by
$(\xi, s) \mapsto (\varphi_s(\xi) , s+2 ).$
Thus we have defined a map
\[
 ( p, \varphi ) : p^{-1} V \to V\times \widetilde{I}_M.
\]
We define a map $\psi : V\times \widetilde{I}_M \to p^{-1}V$
by inserting ``the standard lift of the $V$-component'' to the $\widetilde{I}_M$-component.
The standard lift of $v\in V$ is an element of
$\ems{2}$ which maps to $v$ under $p,$ whose 
positive part consists of right closed intervals.
To state the construction precisely,
we define a map $\psi_s : V\times \ims{s} \to (p^{-1}V)_{s+2}.$
For any element $v = t_1\otimes m_1 + \dots + t_j\otimes m_j \in V, $ we set
\[ L_i =  \,_{p_i}| |t_i|/2,  |t_i|/2 +1 ] \in \interval \]
where $p_i = -t_i/|t_i|$ if $t_i \neq 0$ and
$p_i$ is any of $\pm 1$ if $t_i = 0.$ Then ``the standard lift'' of $v$ is the multiset
$$
L =   ( L_1 , m_1 )\dotplus \dots\dotplus ( L_j , m_j ) .
$$
Then we define
$\psi_s(v, \xi)$ to be the element of  $(p^{-1}V)_{s+2}$ represented by
the multiset 
$   L \dotplus \translation{2}(X)  \dotplus \mu(L \dotplus \translation{2}(X)).$
Now 
$\psi : V\times \widetilde{I}_M \to p^{-1}V$ is defined by
$( v, (\xi, s)) \mapsto (\psi_s(v, \xi) , s+2).$

Now, we define a homotopy $H' : p^{-1}V \to p^{-1}V $ between $\psi \circ ( p,  \varphi ) $ and $id_{p^{-1}V}.$
We can do this by pushing everything to $2\in \RR$ and erases the part of the configuration on the left of $2.$
For this purpose, let $h'_t : \RR\to \RR ~(0\leq t\leq 1)$ be the homotopy defined by
	$$
		h'_t( u ) = \left\{
		\begin{array}{cc}
			u- 2 t & u\geq 2 t\\
			0 & |u| < 2t\\
			u + 2 t & u\leq -2 t
		\end{array}
		\right.
	$$
	Then we define a homotopy $H'_{t} :  (p^{-1}V)_s\to (p^{-1}V)_s$ as follows. If
	$\xi =  (J_1, m_1)\dotplus \dots\dotplus (J_r, m_r)  \in \multi{\interval \times M}$ is a 
	representative of 
	$\xi \in \Image [ \psi \circ ( p, \varphi ) ]$
	then let $H'_t(\xi)$ be the element of $(p^{-1}V)_s$ represented by
	$$
		(\translation{2}h'_t \translation{-2}(J_1), m_1)\dotplus \dots\dotplus (\translation{2}h'_t \translation{-2}(J_r) , m_r).
	$$
	For each $i,$ $h_t(J_i)$ is at least a degenerate interval by our construction.
	It is easy to see that $H'_t$ defines a homotopy $H' : p^{-1}V\to p^{-1}V$ between 
	$\psi \circ ( p, \varphi ) $	and $id_{p^{-1}V}.$

	Next, using the same homotopy $h'_t : \RR\to \RR~(0\leq t\leq 1),$ we define a homotopy
	$H''_t : V\times \widetilde{I}_M\to V\times \widetilde{I}_M$ between 
	$ (p, \varphi)\circ \psi $ and $id_{V\times \widetilde{I}_M}$
	as follows.
	If 
	$\xi =  (J_1, m_1)\dotplus \dots\dotplus (J_r, m_r)   \in \multi{\interval \times M}$ is a 
	representative of $\eta \in \Image [(p, \varphi) \circ \psi],$ then
	$H''_t(\eta)$ be the element of $(p^{-1}V)_s$ represented by
	$$
		(\translation{2}h'_t \translation{-2}(J_1), m_1)\dotplus \dots\dotplus (\translation{2}h'_t \translation{-2}(J_r) , m_r).
	$$
	For each $i,$ $h_t(J_i)$ is at least a degenerate interval by our construction.	
	
	It is easy to see that $H''_t$ defines a homotopy 
	$H''_t : V\times \widetilde{I}_M\to V\times \widetilde{I}_M$ between 
	$ (p, \varphi)\circ \psi $ and $id_{V\times \widetilde{I}_M}.$
	\end{proof}
%%%%%%%%%%%%%%%%%%%%%%%%%%%%%%%%%%%%%%%%%%%%%%%%%%%%
\subsection{Dold-Thom criterion (2)}
	Before proceeding, we investigate a topological property of $p^{-1}z$ for $z\in BM.$
	The following notation is useful.
	Let $\elemone{ v, p, m  }$ denote the elementary configuration of type E1 given by
	$  (J, m)$ with 
	\[
		J = \intervalconvention{\bar{p}}{ -v, v }{p}
	\]
	and let
	$\elemtwo{ v, p, m }$ denote the elementary configuration of type E2 given by
	$ (K, n) \dotplus (K', n) $ with 
	\[
		K' = \left] -1 , -v \right|_{\bar{p}}, K = \,_p| v, 1 [
	\]
put
	\[ e(t,m) = \elemone{ (1-t)/2,t/ |t|, m } \]
	and
	\[ f(t,n) = \elemtwo{ t/2 , -t/|t|, n   }. \]
	Furthermore, for any $m\in M,$ let $Z(m_0)$ denote a finite sum of 
	configurations of the form $E( u_k, p_k, c_k)|_{ ( -1,1 )}$ with
	$ 1/2\leq |u| \leq 1$ and $\sum_k c_k = m_0.$ 

	For any $z \in BM,$ we have a unique representation
	$$ z = 0\otimes m_0 + t_1 \otimes m_1 + \dots + t_s\otimes m_s $$
	such that $ -1 < t_1< t_2 < \dots < t_s < 1, t_i \neq 0$ and $ m_i \neq 0 $ for $1\leq i \leq s. $

	Let $m \in M,$ and let $P(m)$ denote the set of partition of $m$ into a sum of two
	elements in $M,$ that is,
	$	P(m) = \{ (a, b) \in M_2 ~|~ a + b = m\}.	$
	Let $\alpha = ( ( a_1 , b_1 ) ,\dots, (a_s, b_s) ) \in P(m_1)\times \dots\times P(m_s),$ 
	and $F_\alpha$ denote the subspace of $\widetilde{E}_M$ which consists
	of $\xi \in \widetilde{E}_M$ such that 
	$\xi|_{(-1, 1)}$ has a representative of the form
	\begin{equation}
		Z(m_0) \dotplus \sum_{i=1}^{s} e(t_i , a_i) \dotplus	\sum_{i=1}^{s} f(t_i, b_i) \dotplus \sum_{j=1}^{q} f(u_j, n_j)
	\end{equation}
	$1 \leq |u_j| < 2$	for each $j.$
	where $0< |t_i| < 1$ for each $i$ and $1 \leq |u_j| < 2$	for each $j.$
 Then $F_\alpha $ is contained in
	$p^{-1}z.$	Similarly, let $H_\alpha$ denote the subspace of $F_\alpha$ which consists
	of $\xi$ for which $\xi|_{(-3,3)}$ has a representative of the form (1), but now  with an 
	extra condition that $q=0$
	and $f(t,b)$ are altered by $f'(t,b)$ and $Z_0(m_0)$ is of a special form as follows:
	\[
		f'(t,b) =    ( K' , b) \dotplus  (K, b )   
	\]
	with
	\[
			K' = \left] -3 , -t/2 \right|_{\bar{p}} ,  K = \,_{p}\!\left| t/2, 3\right[ ,
	\]
	where $p = t/ |t|,$
and $Z_0(m_0) =   (~\left] -3 , 3 \right[ , m_0)  .$
	Notice that $F_{\alpha}$ is not path-connected in general, but $F_\alpha$
	and $F_\beta$ can not be connected by a continuous 
	path in $p^{-1} z$ whenever $\alpha\neq \beta.$
	The same statement is true for $H_\alpha.$
	In the following lemma, $F= F_\alpha$ and $H= H_\alpha.$ 
%%%%%%%%%%%%%%%%%%%%%%%%%%%%%%%%%%%%%%%%%%%%%%%%%%%%%%%%%%%%%
\begin{lem}\label{lem:deformation_retraction} 
	$H$ is a deformation retract of $F.$
\end{lem}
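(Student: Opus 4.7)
The plan is to construct an explicit strong deformation retraction $r_\tau : F \to F$ with $r_0 = \mathrm{id}_F$, $r_1(F) \subseteq H$, and $r_\tau$ fixing $H$ pointwise for every $\tau \in [0,1]$. For each $\xi \in F$ I would take a reduced $\mu$-invariant representative of $\xi|_{(-3,3)}$ and continuously deform its endpoints and labels until the representative acquires the rigid form prescribed by $H$.

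The homotopy would proceed in three simultaneous subphases, each realized by a piecewise-linear motion of endpoints. First, for each $i$, the outer endpoints of the two intervals of $f(t_i,b_i)$ are linearly moved from $\pm 1$ to $\pm 3$, producing $f'(t_i,b_i)$. Second, each interval in $Z(m_0)$ is widened symmetrically, in the style of the contraction $h_t$ used to prove $\widetilde E_M$ is contractible, until it spans $(-3,3)$; the relation R2 together with $\sum_k c_k = m_0$ then identifies the resulting multiset with the single labeled interval $Z_0(m_0) = (\,]-3,3[\,,m_0)$. Third, the inner endpoints $\pm u_j/2$ of each extra $f(u_j,n_j)$ are pushed outward toward $\pm 1$ so that the half-open pieces become degenerate and the pair is absorbed into the widening $f'$-intervals by the cutting-pasting rule.

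Each motion is $\mu$-symmetric by construction, does not touch the configuration outside $(-3,3)$, and preserves the image in $BM$ under $p$, so the whole homotopy stays within $p^{-1}z \subseteq \widetilde E_M$. At $\tau = 1$ the resulting representative satisfies the form (1) with $q = 0$, with $f$ replaced by $f'$ and $Z(m_0)$ replaced by $Z_0(m_0)$, so it lies in $H$; and any $\xi$ already in $H$ is unaffected by any of the three phases, giving $r_\tau|_H = \mathrm{id}_H$.

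The hardest point I anticipate is the discrete nature of the parity swap between $f(t_i,b_i)$ and $f'(t_i,b_i)$, together with the disappearance of the extra labels $n_j$: neither is literally a continuous motion of a single representative. As in the proof of Lemma~\ref{lem:key-lemma1}, I would reconcile this by performing these transitions through the cutting-pasting and creation-annihilation equivalences in $\widetilde E_M$, where representative-level discontinuities become continuous in the quotient. Well-definedness and continuity of $r_\tau$ on all of $F$ then follow from working consistently with the reduced representative from \S 3.1.
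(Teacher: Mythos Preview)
Your proposal has a structural gap that the paper's argument avoids. You work throughout with the restriction $\xi|_{(-3,3)}$ (or $\xi|_{(-1,1)}$) and its ``outer endpoints'', but $F$ and $H$ are subspaces of $\widetilde E_M$, so the deformation must act on the \emph{full} configuration $\xi$. The pieces $f(t_i,b_i)$ and $f(u_j,n_j)$ that you manipulate are not standalone labelled intervals of $\xi$; they are what one sees after cutting $\xi$'s actual intervals at the window boundary. In particular the ``outer endpoints at $\pm 1$'' of $f(t_i,b_i)$ are artefacts of restriction, not genuine endpoints of $\xi$, so there is nothing to move in phase~1. Likewise in phase~3 the pair $f(u_j,n_j)$ is the trace of a longer interval of $\xi$ carrying the label $n_j$; it cannot be annihilated in isolation, and the proposed absorption into the $f'$-pieces via cutting--pasting fails because cutting--pasting in $I\otimes M$ requires equal labels, whereas $n_j$ need not equal any $b_i$. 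Your remark that the motion ``does not touch the configuration outside $(-3,3)$'' therefore cannot be maintained: every interval crossing the window boundary links the inside to the outside.

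The paper's proof sidesteps this entirely. Instead of moving individual endpoints, it chooses two increasing reparametrisations $\sigma,\tau:[0,\infty)\to[0,\infty)$ (extended oddly to $\RR$), applies $\sigma_*$ to the $e(t_i,a_i)$ and $f(t_i,b_i)$ summands, applies $\tau_*$ to the $f(u_j,c_j)$ summands \emph{together with all of} $\xi|_{(-\infty,-1]\cup[1,\infty)}$, and adds the results. Because $\sigma$ fixes $[0,1/2]$ while $\tau$ sends $1/2\mapsto 1$ and both agree with $v\mapsto 3v-1$ for large $v$, the inner data are kept, the $f$-pieces are stretched to the $f'$ form, and the extra $f(u_j,c_j)$ are pushed out of the central window as part of the outer configuration rather than destroyed. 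Linear interpolation between $(\sigma,\tau)$ and $(\mathrm{id},\mathrm{id})$ then gives the deformation. The key idea you are missing is this use of global monotone self-maps of $\RR$ acting on whole intervals, which automatically keeps the inside and outside of the window compatible.
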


\begin{proof}
	To construct a retraction $r : F \to H, $ we consider maps 
	$\sigma : [ 0, \infty ) \to [ 0, \infty ) $  and $\tau : [ 1/2, \infty ) \to [ 1, \infty ) $ 
	defined by
	\begin{eqnarray*}
		\sigma(v)  & = &  \left\{\begin{array}{ll}
				v & (0\leq v \leq 1/2)\\
				3v-1  & (v \geq 1/2)\\
				\end{array}\right.\\
		\tau( v ) & = & \left\{\begin{array}{ll}
			 		v+1/2  & ( 1/2\leq v \leq 3/4 )\\
					3v-1 & ( v \geq 3/4 )
			 	\end{array}\right.
	\end{eqnarray*}
	we extend $\sigma$ and $\tau$ by $\sigma( v) = -\sigma(-v)$ and $\tau(v) = - \tau( -v ) $ for negative $v.$

	For any element of $\xi \in F,$ $\xi|_{ ( -1, 1 ) } $ has a representative
	of the form
	$$		\sum_{i=1}^{s} e(t_i , a_i) \dotplus	\sum_{i=1}^{s} f(t_i, b_i)\dotplus	\sum_{i=1}^{q} f(u_j, c_j),$$
	In the following argument, we use the fact that since $b_j'$ is self-insummable,
	it is distinct from any $n_j'.$
	If $\varphi : U \to V$ is a function with gradient greater or equal to 1 defined on an open interval 
	$U\subset \RR$ with values in 
	 another interval $V\subset \RR,$ and if $h = ( J_i, m_i )  \in \multi{\interval\times M}$
	is a multiset in which each $J_i = \intervalconvention{p_i}{u_i, v_i}{q_i}$ corresponds to an interval contained in $U,$	
	then by $\varphi_* h,$ we mean a multiset $ ( J_i', m_i ),$ where 
	$J_i' = \intervalconvention{p_i}{\varphi(u_i), \varphi(v_i) }{q_i}.$
	We put
	$$
		\xi' = \sum_{i=1}^s \sigma_*(e (t_i , a_i)  ) \dotplus \sum_{i=1}^s \sigma_*(f(t_i, b_i)) %
		\dotplus \sum_{i=1}^{q} \tau_*(   f(u_j, c_j)    ).
	$$
	Then we put
	$$r(\xi ) = \xi' + \tau_*(\xi |_{ (-\infty, -1] \cup [1 , \infty ) }).$$
	This defines a deformation retraction  $ r  : F  \to H.$ 
\end{proof}

%The following lemma is the second half of the Dold-Thom criterion.
%%%%%%%%%%%%%%%%%%%%%%%%%%%%%%%%%%%%%%%%%%%%%%%%%%%%%%%%%%%%%
	\begin{lem}\label{lem:key-lemma2} For any $l \geq 1, $
		there exists an open set $\openset\subset F_{\filternumber} BM$ which contains 
		$F_{\filternumber-1} BM$ and
		homotopies $h_t: \openset\to \openset$ and $H_t : p^{-1}\openset\to p^{-1}\openset$ 
		such that
		\begin{enumerate}
			\item $h_0 =  id_{\openset}$, $h_t(F_{\filternumber-1} BM) \subset F_{\filternumber-1} BM$ and
			$h_1(\openset) \subset F_{\filternumber-1} BM$,
			\item $H_0 = id_{p^{-1}\openset}$ and $p\circ H_t = h_t\circ p$ for all $t$, and
			\item $H_1 : p^{-1}z \to p^{-1} h_1(z)$ is a weak homotopy equivalence for all $z\in \openset.$
		\end{enumerate}
	\end{lem}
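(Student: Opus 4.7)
I would follow the standard Dold--Thom pattern, stratum by stratum. By Lemma \ref{lem:unique-representation} and the self-insummability of $M$, each $z \in F_\filternumber BM$ has a unique representation
\[
z = 0 \otimes m_0 + t_1 \otimes m_1 + \dots + t_s \otimes m_s
\]
with $-1 < t_1 < \dots < t_s < 1$ and $t_i, m_i \neq 0$ for $i \geq 1$, where $s \leq \filternumber$. Fix $0 < \delta < 1/4$ and take
\[
\openset = \{z : s < \filternumber\} \cup \{z : s = \filternumber \text{ and } \min(1 - t_\filternumber,\ 1 + t_1) < \delta\}.
\]
This is open and contains $F_{\filternumber-1}BM$. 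Define $h_t$ to be the identity on $F_{\filternumber-1}BM$, and on $\openset \setminus F_{\filternumber-1}BM$ to linearly slide whichever outermost coordinate lies within $\delta$ of the basepoint (either $t_1$ toward $-1$ or $t_\filternumber$ toward $+1$) along $S^1$ to reach $\pm 1$ at $t=1$; then $h_1(\openset) \subset F_{\filternumber-1}BM$ by construction.

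\textbf{Lifting $h_t$ to $H_t$.} For $\xi \in p^{-1}\openset$, the definition of $p = \mathrm{ev}_0 \circ \beta$ and of $\omega$ show that $p(\xi) = z$ is controlled by the elementary configurations of $\xi$ lying in $(-1,1)$ (together with their mirror images on the negative side). The summand $t_{i_0} \otimes m_{i_0}$ arises from a distinguished pair $e(t_{i_0}, a_{i_0}) \dotplus f(t_{i_0}, b_{i_0})$ with $a_{i_0} + b_{i_0} = m_{i_0}$, located near abscissa $t_{i_0}$. I would define $H_t(\xi)$ by applying a piecewise linear expansion $\sigma_t : \RR \to \RR$ (in the spirit of the map $\tau$ from Lemma \ref{lem:deformation_retraction}) that pushes a neighborhood of $t_{i_0}$ outward past $\pm 1$ while fixing a central interval, together with its mirror on the negative side to preserve $\mu$-invariance. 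A direct check using the formula for $\omega$ shows that the induced motion in $BM$ is exactly $h_t$, yielding $p \circ H_t = h_t \circ p$; the conditions $H_0 = \mathrm{id}$ and $H_t(p^{-1}\openset) \subset p^{-1}\openset$ are immediate.

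\textbf{Weak equivalence on fibers.} Fix $z \in \openset \setminus F_{\filternumber-1}BM$ and let $z' = h_1(z)$. By Lemma \ref{lem:deformation_retraction} both $p^{-1}z$ and $p^{-1}z'$ deformation retract to disjoint unions $\bigsqcup_\alpha H_\alpha$ and $\bigsqcup_{\alpha'} H_{\alpha'}$ indexed by the possible partitions of each $m_i$. A partition $\alpha = ((a_1,b_1), \dots, (a_\filternumber, b_\filternumber))$ for $z$ corresponds to a pair consisting of a partition $\alpha'$ for $z'$ (obtained by forgetting the $i_0$-th component) together with a choice $(a_{i_0}, b_{i_0}) \in P(m_{i_0})$. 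Under $H_1$, the distinguished pair $e(t_{i_0},a_{i_0}) \dotplus f(t_{i_0},b_{i_0})$ is pushed outside $(-1,1)$, where its position is no longer constrained by $z'$, so $H_1$ maps the piece $H_\alpha$ into the piece $H_{\alpha'}$ with an extra contractible factor recording the pushed-out data. Assembling these identifications, $H_1$ induces an isomorphism on all homotopy groups.

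\textbf{Main obstacle.} The delicate point is the last step: one must verify \emph{continuously}, over all $z \in \openset$, that the partition correspondence genuinely yields a weak equivalence and not merely a bijection of components. In particular, it must be checked that the pushed-out elementary configuration space is contractible in a controlled way despite the reduction relations (cutting--pasting, creation--annihilation) and the summability constraints that govern nearby configurations at $\pm 1$. The self-insummability hypothesis enters crucially here, for without it Lemma \ref{lem:unique-representation} fails and the partitions of $m_{i_0}$ can collapse as $t_{i_0} \to \pm 1$, destroying the fiberwise equivalence.
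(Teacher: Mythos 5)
Your overall strategy is correct and matches the paper: choose an open set $\openset$ consisting of configurations having a summand with $t_i$ close to the basepoint, push such coordinates to $\pm 1$ by a homotopy $h_t$, lift it to a rescaling homotopy $H_t$, and use Lemma~\ref{lem:deformation_retraction} to control the fibers. However, there is a genuine gap in your construction of $h_t$. You propose to ``slide whichever outermost coordinate lies within $\delta$'' to the basepoint. Taken literally, $h_1(z)$ has a jump at the boundary where a coordinate crosses distance $\delta$ from $\pm 1$: for $t_\filternumber$ just below $1-\delta$ you do nothing, while just above you send it to $1$, and $F_{\filternumber-1}BM \not\ni z$ so the two outputs are genuinely different. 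The paper avoids this by applying a single piecewise-linear map $h'_t$ (with $h'_1$ collapsing $\{|u|\geq 1/2\}$ to the basepoint and expanding the interior) \emph{uniformly to every coordinate}; continuity is then automatic, and $h_1$ lowers the filtration for every $z\in \openset$ since at least one $|t_i|>1/2$ gets killed. Any repair of your $h_t$ would need an analogous tapering, i.e.\ you are forced to move all nearby coordinates rather than one distinguished $t_{i_0}$.

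The same issue propagates to $H_t$: your ``piecewise linear expansion pushing a neighborhood of $t_{i_0}$ outward'' presupposes a continuous choice of $t_{i_0}$. The paper instead decomposes a $\mu$-invariant representative of $\xi$ as $S_-\dotplus S_0 \dotplus S_+$ and applies two different rescalings $\lambda_t$ (to $S_0$, collapsing a neighborhood of the origin) and $\nu_t$ (to $S_+$, stretching outward) with matched shifts near $v=1$ so the cover relation $p\circ H_t = h_t\circ p$ holds. This careful treatment of the $S_0$-part (intervals straddling the origin) has no analogue in your sketch and is essential. Finally, for the fiber equivalence you observe that partitions for $z'$ correspond to partitions for $z$ with an extra contractible factor, but the paper replaces this plausibility argument with an explicit map $g: H_{\alpha'}\to F_\alpha$ (inserting a ``standard lift of $z$ of type $\alpha$'' at the origin and gluing the mirror image) together with the retraction $r$ from Lemma~\ref{lem:deformation_retraction}, and verifies $g\circ r$ is a homotopy inverse of $H_1|_{F_\alpha}$. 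You correctly flag this as the delicate step, but without the explicit $g$ the argument remains unfinished.
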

\begin{proof}
	%{\bf Step 1}(Definition of $h_t$)

		Let $U'$ be the subspace of $(S^1 \times M)^\filternumber$ which consists of
		elements $$((t_1, m_1),\dots, (t_\filternumber, m_\filternumber))$$ such that
		there exists at least  one $i$ with $|t_i| > 1/2.$
		Then let $U$ be the subspace of $F_j BM$  which consists of elements which is represented by
		elements in $U'.$

	We define a homotopy $h'_t : [-1, 1] \to [-1, 1]~(0 \leq t \leq 1 )$ by
	\begin{equation} \label{eqn:ht'}
		h'_t( u ) = \left\{
		\begin{array}{cc}
			-1 & -1\leq u\leq \frac{t}{2} -1\\
			2u/ (2-t) & |u| < 1-\frac{t}{2}\\
			1 & 1-\frac{t}{2} \leq u\leq 1.
		\end{array}
		\right.
	\end{equation}
	Then we define a homotopy $h_t : \openset\to \openset$ by
	$$
	( (t_1, m_1) , \dots, (t_\filternumber, m_\filternumber ))\mapsto%
	 ( ( h'_t( t_1 ), m_1  ) , \dots, (h'_t(t_\filternumber) ,  m_\filternumber  ) ).
	$$
	%%%%
	%{\bf Step 2} (Definition of $H_t$)

	Next, we construct a homotopy $H_t : (p^{-1}O)_s \to (p^{-1}O)_{s+5/2}.$
	Consider a homotopy $\lambda_t : [ 0, \infty ) \to [ 0, \infty ) $ defined by
	\[
		\lambda_t(v)   =   \left\{\begin{array}{ll}
				0 & (0\leq v \leq t/4)\\
				(4v - t)/ (4-2t)  & (t/4\leq v \leq 1/2)\\
				(3t+1) v - 3t/2 & ( 1/2 \leq v \leq 1)\\
				v + 3t/2 & (v \geq 1)
			\end{array}\right.
	\]
	We also denote by $\lambda_t$ the self-homotopy of $\interval$
	 given by 	
\begin{equation}\label{eqn:lambda_t}
	\intervalconvention{p}{a,b}{q}\mapsto \intervalconvention{p}{\lambda_t(a),\lambda_t(b)}{q}.
\end{equation}
	Then let $\widetilde{\lambda}_t = \multi{ \lambda_t \times id_M }$
	be the self-homotopy of $ \multi{ \interval \times M} .$
	Similarly, using a homotopy $\nu_t : [ 0, \infty ) \to [ 0, \infty ) $ defined by
	\[
		\nu_t( v )  =  \left\{\begin{array}{ll}
			 		(t+1)v  & ( 0\leq v \leq 3/4 )\\
					(3t+1) v - 3t/2 & ( 3/4 \leq v \leq 1)\\
					v + 3t/2 & (v \geq 1)
		 	\end{array}\right._,
	\]
	we define a self-homotopy of $\interval$ by
	\begin{equation}\label{eqn:nu_t}
		\intervalconvention{p}{a,b}{q}\mapsto \intervalconvention{p}{\nu_t(a),\nu_t(b)}{q}.
	\end{equation}
	Then	we can define a self-homotopy $\widetilde{\nu}_t$ of $\multi{\interval\times M}$ by
 	$\widetilde{\nu}_t = \multi{ \nu_t \times id_M }.$
	Let $S_{-} \dotplus S_0 \dotplus S_{+} \in \multi{ \interval\times M}$ be a representative
	of a given element  $\xi \in (p^{-1}O)_s$ as in \S 4.4. Then let $H'_t(\xi)$ be
	an element of $(p^{-1}O)_{s+5/2}$ represented by
	$\widetilde{\lambda}_t( S_0 ) \dotplus \widetilde{\nu}_t( S_+ ) \dotplus \mu\widetilde{\nu}_t(S_+).$
	Now $H_t : (p^{-1}O)_s \to (p^{-1}O)_{s+5/2}$ is defined by $H_t( \xi, s ) = ( H'_t(\xi), s+5/2).$
	It is clear that $H_t$ covers $h_t.$\smallskip\\
	%%%%
	%{\bf Step 3}~(Homotopy inverse)

	To complete the proof, we show that $H_1 : p^{-1}z \to p^{-1} h_1(z)$ is in fact a 
	homotopy equivalence. By renumbering $t_1, \dots , t_s,$ we may assume that 
	there exists a number $s_0$ such that
	$| t_i | > 1/2$ for $i\leq s_0$ and $|t_i| \leq 1/2$ otherwise.
	Then 	$h_1 z = 0\otimes m_0 +  h_1(t_{s_0+1}) \otimes m_{s_0+1} + \dots + h_1(t_s) \otimes m_s.$
	Let 
	\[\alpha' = ( (a_{s_0+1}, b_{s_0+1}), \dots , ( a_s , b_s ) ) \in P(m_{s_0+1})\times\dots\times P(m_s).\]
	
	By Lemma \ref{lem:deformation_retraction}, we have a deformation retraction 
	$r: F'_\alpha \to H'_\alpha.$
	We construct a map $g: H_{\alpha'} \to F_{\alpha}$ such that $g\circ r$ is
	a homotopy inverse of $H_1|_{F_\alpha}.$ Given an element of $H_{\alpha'},$
	we want to recover the information of $F_\alpha$ by using the data given by $\alpha.$

	For this, we insert ``the standard lift of $z$ of type $\alpha$'' 
	%%%%%%%%%%%%%%%%%%%%%%%%%%%%%%%%%%%%%%%%%%%%%%%%%%%%%%%
	defined by the multiset
	\begin{equation}
		\zeta = Z'(m_0) \dotplus \sum_{i=1}^{s} e(t_i , a_i) \dotplus	\sum_{i=1}^{s} f'(t_i, b_i)
	\end{equation}
	at the origin, where 
	$Z'(m_0) =  ( [ -1, 1 [  , m_0 )$
	and $f'(t,b)$ denotes a configuration which is constructed by
	changing the parity of only one endpoint among four occurring in $f(t,b)$ so that
	the resulting configuration consists of two half-open intervals.
	For $\xi \in H_{\alpha'},$ let $S'_0 \dotplus S_+$ be a representative of the 
	positive part of $\xi.$
	Then we may assume that 
	$$S_0'  =  (K_0, m_0)\dotplus (K_1,m_1) \dotplus \dots \dotplus(K_r, m_r)$$
	 with $u_L(K_i) = 0, p_L(K_i) = 1.$
	For $1\leq i\leq r,$ we alter $p_L(K_i)$ by the value $-p_R(K_i)$ and
	denote by $S_0''$ the resulting multiset.
	Now if we define
	$g : H_{\alpha'}\to F_\alpha$  by
	$$
		g(\xi ) = \zeta \dotplus \mu ( \translation2 ( S_0'' \dotplus S_+ )).
	$$
	then it is easy to see that
	$  g\circ r$ is a homotopy inverse to $H_1 |_{F_\alpha}.$
\end{proof}

\begin{proof}[Proof of Propostion \ref{prop:quasifibration}]
	$F_0 BM = \{*\}$ is obviously a distinguished set with respect to $p.$
	By induction, assume that $F_{l-1} BM$ is a distinguished set. Then 
	by Lemma \ref{lem:key-lemma2} and Hilfssatz 2.10 of 
	\cite{dold-thom-quasifaserungen-und-unendliche}, we have an open set
	$O\subset F_{l}$ which contains $F_{l-1} BM,$ which is a distinguished set.
	Then by Lemma \ref{lem:key-lemma1} and 
	Satz 2.2 of \cite{dold-thom-quasifaserungen-und-unendliche},
	we have that $F_{l} BM$ is a distinguished set. Then by Satz 2.15 of 
	\cite{dold-thom-quasifaserungen-und-unendliche}, 
	$p : \widetilde{E}_M \to BM$ is a quasifibration.
\end{proof}

\bibliographystyle{plain}
\bibliography{reference}

\begin{thebibliography}{10}

\bibitem{ayala-francis-factorization-homology}
David Ayala and John Francis.
\newblock Factorization homology of topological manifolds.
\newblock {\em J. Topol.}, 8(4):1045--1084, 2015.

\bibitem{bodigheimer-stable-splittings-of}
C.-F. B\"{o}digheimer.
\newblock Stable splittings of mapping spaces.
\newblock In {\em Algebraic topology ({S}eattle, {W}ash., 1985)}, volume 1286
  of {\em Lecture Notes in Math.}, pages 174--187. Springer, Berlin, 1987.

\bibitem{dold-thom-quasifaserungen-und-unendliche}
Albrecht Dold and Ren\'{e} Thom.
\newblock Quasifaserungen und unendliche symmetrische {P}rodukte.
\newblock {\em Ann. of Math. (2)}, 67:239--281, 1958.

\bibitem{knudsen-configuration-spaces-in}
Ben {Knudsen}.
\newblock {Configuration spaces in algebraic topology}.
\newblock {\em arXiv e-prints}, page arXiv:1803.11165, March 2018.

\bibitem{macduff-configuration-spaces}
Dusa MacDuff.
\newblock Configuration spaces.
\newblock pages 88--95. Lecture Notes in Math., Vol. 575, 1977.

\bibitem{may-geometry-of-iterated}
J.~P. May.
\newblock {\em The geometry of iterated loop spaces}.
\newblock Springer-Verlag, Berlin-New York, 1972.
\newblock Lectures Notes in Mathematics, Vol. 271.

\bibitem{mccord-classifying-spaces-and}
M.~C. McCord.
\newblock Classifying spaces and infinite symmetric products.
\newblock {\em Trans. Amer. Math. Soc.}, 146:273--298, 1969.

\bibitem{milgram-iterated-loop-spaces}
R.~James Milgram.
\newblock Iterated loop spaces.
\newblock {\em Ann. of Math. (2)}, 84:386--403, 1966.

\bibitem{okuyama-space-of-intervals}
Shingo Okuyama.
\newblock The space of intervals in a {E}uclidean space.
\newblock {\em Algebr. Geom. Topol.}, 5:1555--1572, 2005.

\bibitem{salvatore-configuration-spaces-with}
Paolo Salvatore.
\newblock Configuration spaces with summable labels.
\newblock In {\em Cohomological methods in homotopy theory ({B}ellaterra,
  1998)}, volume 196 of {\em Progr. Math.}, pages 375--395. Birkh\"{a}user,
  Basel, 2001.

\bibitem{segal-configuration-spaces-and}
Graeme Segal.
\newblock Configuration-spaces and iterated loop-spaces.
\newblock {\em Invent. Math.}, 21:213--221, 1973.

\bibitem{segal-k-homology-theory}
Graeme Segal.
\newblock {$K$}-homology theory and algebraic {$K$}-theory.
\newblock pages 113--127. Lecture Notes in Math., Vol. 575, 1977.

\bibitem{shimakawa-configuration-spaces-with}
Kazuhisa Shimakawa.
\newblock Configuration spaces with partially summable labels and homology
  theories.
\newblock {\em Math. J. Okayama Univ.}, 43:43--72, 2001.

\bibitem{stanley-enumerative-combinatorics}
Richard~P. Stanley.
\newblock {\em Enumerative combinatorics. {V}olume 1}, volume~49 of {\em
  Cambridge Studies in Advanced Mathematics}.
\newblock Cambridge University Press, Cambridge, second edition, 2012.

\bibitem{steenrod-convenient-category-of}
N.~E. Steenrod.
\newblock A convenient category of topological spaces.
\newblock {\em Michigan Math. J.}, 14:133--152, 1967.

\bibitem{tamaki-twisting-segal's-k-homology}
Dai Tamaki.
\newblock Twisting {S}egal's {$K$}-homology theory.
\newblock In {\em Noncommutative geometry and physics. 3}, volume~1 of {\em
  Keio COE Lect. Ser. Math. Sci.}, pages 197--235. World Sci. Publ.,
  Hackensack, NJ, 2013.

\end{thebibliography}

\end{document}